\definecolor{darkblue}{rgb}{0.0,0.0,0.3}
\theoremstyle{plain}
\newtheorem{theorem}{Theorem}  
\newtheorem*{theorem*}{Theorem}
\newtheorem*{corollary*}{Corollary}
\newtheorem{cor}[theorem]{Corollary}
\newtheorem{lemma}[theorem]{Lemma}
\newtheorem{prop}[theorem]{Proposition}
\numberwithin{equation}{section}
\theoremstyle{definition}
\newtheorem{defi}[theorem]{Definition}
\newtheorem{rem}[theorem]{Remark}
\newtheorem{exam}[theorem]{Example}
\numberwithin{theorem}{section}
\newcommand{\m}[1]{\mathbb{#1}}
\newcommand{\ms}[1]{\mathscr{#1}}
\newcommand{\mr}[1]{\mathrm{#1}}
\newcommand{\mc}[1]{\mathcal{#1}}
\newcommand{\CC}{{\m{C}}}
\newcommand{\eps}{\varepsilon}
\newcommand{\s}{\subseteq}
\newcommand{\mA}{\mc{A}}
\newcommand{\mM}{\ms{M}}
\newcommand{\mZ}{\ms{Z}}
\newcommand{\idd}{\mr{id}}
\newcommand{\mult}{\mr{mult}}
\newcommand{\Aut}{\mr{Aut}}
\newcommand{\nl}{\vspace{12pt}\noindent}
\newcommand{\Span}{\mathrm{span}}
\newcommand{\ov}{\overline}
\newcommand{\sm}{\setminus}
\newcommand{\oo}{\emptyset}
\begin{document}

\title[Injective envelopes and the intersection property]{Injective envelopes\\and the intersection property}
\author[R. S. Bryder]{Rasmus Sylvester Bryder}
\address{Department of Mathematics\\ University of Copenhagen\\
U\-ni\-ver\-sitets\-parken 5, 2100 Copenhagen, Denmark}
\email{rbryder@gmail.com}

\begin{abstract}
We consider the ideal structure of a reduced crossed product of a unital $C^*$-algebra equipped with an action of a discrete group. More specifically we find sufficient and necessary conditions for the group action to have the intersection property, meaning that non-zero ideals in the reduced crossed product restrict to non-zero ideals in the underlying $C^*$-algebra. We show that the intersection property of a group action on a $C^*$-algebra is equivalent to the intersection property of the action on the equivariant injective envelope. We also show that the centre of the equivariant injective envelope always contains a $C^*$-algebraic copy of the equivariant injective envelope of the centre of the injective envelope. Finally, we give applications of these results in the case when the group is $C^*$-simple.
\end{abstract}

\keywords{reduced crossed product, injective envelope, primeness, intersection property, reduced group $C^*$-algebra}
\thanks{The author is supported by a PhD stipend from the Danish National Research Foundation (DNRF) through the Centre for Symmetry and Deformation at University of Copenhagen}
\maketitle

\section{Introduction}
In his paper \cite{hamana1979} from 1979, Hamana proved the existence and uniqueness of the \emph{injective envelope} $I(A)$ of a unital $C^*$-algebra $A$. The injective envelope is a unital $C^*$-algebra, which is injective in the category of unital $C^*$-algebras with unital, completely positive maps (the monomorphisms in said category being complete isometries), which is minimal with respect to containment of a $^*$-algebraic completely isometric copy of a given $C^*$-algebra.

Hamana later generalized his result \cite{hamana1985} to $G$-operator systems, i.e., operator systems with an action of a discrete group $G$ by complete order isomorphisms, proving that any such system $S$ has a unique $G$-injective envelope $I_G(S)$ in the category of $G$-operator systems and unital $G$-equivariant completely positive maps. In this paper we consider the structure of the $G$-injective envelope of a unital $C^*$-algebra with an action of a discrete group $G$.

Recently the notion of a $G$-injective envelope of a $G$-$C^*$-algebra has surfaced as a helpful device in uncovering the ideal structure of the reduced group $C^*$-algebra and reduced crossed products associated to a discrete group $G$. Perhaps the most celebrated application has been Kalantar and Kennedy's characterizations of simplicity of the reduced group $C^*$-algebra of a discrete group \cite{kalantarkennedy}. We say that a discrete group $G$ is \emph{$C^*$-simple} if its associated reduced group $C^*$-algebra is simple. Kalantar and Kennedy then proved that $G$ is $C^*$-simple if and only if the action of $G$ on the maximal ideal space of the $G$-injective envelope $I_G(\CC)$ is free.

The result of Kalantar and Kennedy was later generalized by Kawabe \cite{kawabe} who proved for any unital commutative $G$-$C^*$-algebra $A$ that the action of $G$ on $A$ satisfies the \emph{intersection property} if and only if the action of $G$ on the maximal ideal space of the commutative $C^*$-algebra $I_G(A)$ is free. The action of a discrete group $G$ on a $C^*$-algebra $A$ is said to satisfy the intersection property if every non-zero ideal in the associated reduced crossed product $A\rtimes_r G$ has non-zero intersection with $A$. This concept provides an important means of understanding ideals in reduced crossed products. Indeed, the reduced crossed product $A\rtimes_r G$ is simple if and only if $A$ has no non-trivial $G$-invariant ideals (i.e., the action is minimal) and the action of $G$ on $A$ has the intersection property. In \cite{kawamuratomiyama}, Kawamura and Tomiyama gave a characterization of the intersection property for amenable groups acting on unital commutative $C^*$-algebras. Archbold and Spielberg were among the first \cite{archboldsp} to give sufficient criteria for the property to hold even in the general non-commutative case, namely in terms of \emph{topological freeness} of the group action. Generally, however, the intersection property need not imply topological freeness.

The central question in this paper is whether Kawabe's result can be generalized to the non-commutative case. In general the structure of the injective envelope of a $C^*$-algebra is hard to discern, and even more so for the equivariant case. In Theorem~\ref{essip} we establish that the action of $G$ on a unital (not necessarily commutative) $C^*$-algebra $A$ has the intersection property if and only if the action of $G$ on any $G$-essential $C^*$-algebra extension $B$ of $A$ has the intersection property (see Definition \ref{essrig}). Note that the $G$-injective envelope of a $G$-$C^*$-algebra is a $G$-essential $C^*$-algebra extension of $A$ (see Section \ref{ext} below).

In order to find sufficient criteria for the action of a group on a unital $C^*$-algebra to have the intersection property, we consider the question of whether the centre of the $G$-injective envelope contains $G$-invariant $C^*$-sub\-al\-ge\-bras that may be more easily determined by the structure of the original $C^*$-algebra than the centre itself. It is \emph{not} possible in general to identify the centre of the $G$-injective envelope by means of the centre of the original $C^*$-algebra (see Remark \ref{countcount}). However, in Theorem \ref{main2} we prove for all unital $C^*$-algebras $A$ that the centre of the $G$-injective envelope $I_G(A)$ of $A$ contains the $G$-injective envelope of the centre of the injective envelope $I(A)$ of $A$ as a $G$-invariant unital $C^*$-subalgebra, or, more formally,
$$I_G(Z(I(A)))\s Z(I_G(A)).$$ We are unsure whether equality holds. 
 By combining this result with a result of Kawabe, we obtain the result (Theorem \ref{supercentre}) that the action of a discrete group $G$ on a unital $C^*$-algebra $A$ has the intersection property whenever the action of $G$ on the centre of the injective envelope $I(A)$ has the intersection property.

\nl

Any $C^*$-algebra is assumed to be unital (unless otherwise stated) and the centre of a $C^*$-algebra $A$ is denoted by $Z(A)$. Ideals in $C^*$-algebras are always assumed to be closed and two-sided. Moreover $G$ \emph{always} denotes a discrete group with identity element $1$.
\section{Preliminaries}
\subsection{Extensions of operator systems}\label{ext}Recall that an \emph{operator system} is a self-adjoint linear subspace $S$ of a unital $C^*$-algebra $A$ containing the identity $1$ of $A$. A \emph{complete order isomorphism} of two operator systems $S$ and $S'$ is a unital, completely positive (\mbox{u.c.p.}) linear isomorphism $\varphi\colon S\to S'$ such that $\varphi^{-1}$ is also completely positive. If $S'=S$, we say that $\varphi$ is an \emph{automorphism}, and we let $\Aut(S)$ denote the group of automorphisms of $S$. Any $\varphi\in\Aut(S)$ is automatically completely isometric, and further, if $S$ is a (unital) $C^*$-algebra, then the definition of an automorphism coincides with the usual notion of an automorphism of a $C^*$-algebra (e.g., see the beginning of the proof of \cite[Theorem~3.1]{choieffros}). 

An action of a discrete group $G$ on an operator system $S$ is always assumed to be by automorphisms, and in this setting we then say that $S$ is a \emph{$G$-operator system}. (If $S$ is a $C^*$-algebra, we will of course say that $S$ is a \emph{$G$-$C^*$-algebra}.) For any $g\in G$ and $x\in S$, the image of $x$ under $g$ is denoted by $gx$.

\begin{defi}A $G$-operator system $S$ is said to be \emph{$G$-injective} if for all $G$-operator systems $E$ and $F$, any $G$-equivariant complete isometry $\kappa\colon E\to F$  and any $G$-equivariant \mbox{u.c.p.} map $\varphi\colon E\to S$ there exists a $G$-equivariant \mbox{u.c.p.} map $\tilde{\varphi}\colon F\to S$ satisfying $\tilde{\varphi}\circ\kappa=\varphi$, i.e., the following diagram commutes:\[\begin{tikzcd}
E\arrow[r,hook,"\kappa"]\arrow[dr,"\varphi"]&F\arrow[d,dashed,"\tilde\varphi"]\\
&S\end{tikzcd}\]\end{defi}

A $G$-injective $G$-operator system is therefore an injective object in the category of $G$-operator systems and $G$-equivariant \mbox{u.c.p.} maps.

\begin{defi}\label{essrig}Let $S$ and $M$ be $G$-operator systems and let $\kappa\colon S\to M$ be a completely isometric, $G$-equivariant {u.c.p.} map. We then say that $(M,\kappa)$ is an \emph{extension} of $S$. Further, we say that
\begin{enumerate}[\upshape(i)]
\item $(M,\kappa)$ is \emph{$G$-essential} if it holds for any $G$-operator system $N$ and any $G$-equivariant \mbox{u.c.p.} map $\varphi\colon M\to N$ that $\varphi$ is completely isometric whenever $\varphi\circ\kappa$ is;
\item $(M,\kappa)$ is \emph{$G$-rigid} if the only $G$-equivariant \mbox{u.c.p.} map $\varphi\colon M\to M$ satisfying $\varphi\circ\kappa=\kappa$ is the identity map.
\end{enumerate}
If $(M,\kappa)$ is $G$-essential and $M$ is $G$-injective we say that $(M,\kappa)$ is a \emph{$G$-injective envelope} of $S$.
\end{defi}

If we let $G=\{1\}$ in the above definitions, we obtain the notions of \emph{injectivity} for operator systems, as well as plain \emph{essentiality} and \emph{rigidity} of an extension.

For any extension $(M,\kappa)$ of a $G$-operator system $S$ we will often suppress the complete isometry $\kappa$ and simply assume that $S$ is a $G$-invariant operator subsystem of $M$. Further, an extension $(M,\kappa)$ of a unital $C^*$-algebra $S$ is said to be a \emph{$C^*$-algebra extension} if $M$ is a $G$-$C^*$-algebra and $\kappa\colon S\to M$ is a $G$-equivariant, unital, injective $^*$-homomorphism.

\subsection{Equivariant injective envelopes}
In \cite[Section 2]{hamana1985} (see also \cite{hamana1979} and \cite{hamanaop}), Hamana proves that any $G$-operator system $S$ has a $G$-injective envelope $(I_G(S),\kappa)$ which is unique in the sense that for any other $G$-injective envelope $(M,\kappa')$ there is a complete order isomorphism $\varphi\colon I_G(S)\to M$ satisfying $\varphi\circ\kappa=\kappa'$. The \emph{injective envelope} $I(S)$ of an operator system $S$ can be taken to be the $G$-injective envelope of $S$ for $G=\{1\}$. 

We briefly sketch Hamana's proof. For any Hilbert space $H$ and any operator system $S\s B(H)$, the space $\ell^\infty(G,S)$ of bounded $S$-valued functions with coordinatewise operations becomes a $G$-operator subsystem of $B(H\otimes\ell^2(G))$ with the action of $G$ given by left translation, i.e.,
$$(gf)(h)=f(g^{-1}h),\quad g,h\in G,\ f\in\ell^\infty(G,S),$$ and each $f\in\ell^\infty(G,S)$ acting on $H\otimes\ell^2(G)$ by defining $f(\xi\otimes\delta_g)=f(g)\xi\otimes\delta_g$ for $\xi\in H$ and $g\in G$. Hamana first shows that if $S$ is an injective operator system, then $\ell^\infty(G,S)$ is $G$-injective, and that any $G$-injective $G$-operator system is injective. This proves to be essential in his construction of the $G$-injective envelope which can be described as follows. For $S\s M$, where $M$ is a $G$-injective $G$-operator system, an \emph{$S$-projection} on $M$ is an idempotent $G$-equivariant \mbox{u.c.p.} map $\varphi\colon M\to M$ satisfying $\varphi|_S=\idd_S$. A partial ordering on the set of $S$-projections on $ M$ can be defined by writing $\varphi\prec\psi$ for $S$-projections $\varphi,\psi\colon M\to M$ if $\varphi\circ\psi=\psi\circ\varphi=\varphi$.

Hamana uses a Zorn's lemma argument on the set of seminorms induced by $S$-projections on $M$ to show that there is a minimal $S$-projection $\varphi\colon M\to M$. Letting $\kappa\colon S\to M$ be the inclusion map, then $(\varphi(M),\kappa)$ is a $G$-rigid and $G$-injective extension of $S$. Since a $G$-injective extension is $G$-rigid if and only if it is $G$-essential, it follows that $(\varphi(M),\kappa)$ is the $G$-injective envelope of $S$. 

The canonical $G$-injective $G$-operator system is $\ell^\infty(G,B)$ where $B$ is an injective $C^*$-algebra. As the $G$-injective envelope is unique up to $G$-equivariant isomorphism, the preceding discussion of the construction of the $G$-injective envelope allows us to record the following lemma.

\begin{lemma}\label{injyes}Let $A$ be a unital $G$-$C^*$-algebra and let $B$ be an unital injective $C^*$-algebra containing $A$ as a unital $C^*$-subalgebra. Let $\kappa\colon A\to\mM=\ell^\infty(G,B)$ be the $G$-equivariant injective $^*$-homomorphism given by $$\kappa(x)(g)=g^{-1}x,\quad x\in A,\ g\in G.$$
Then there is a $\kappa(A)$-projection $\varphi\colon\mM\to\mM$ such that $(\varphi(\mM),\kappa)$ is the $G$-injective envelope of $A$.\end{lemma}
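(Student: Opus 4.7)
The plan is to verify that the map $\kappa$ exhibits $A$ as a $G$-invariant unital $C^*$-subalgebra of the canonical $G$-injective $C^*$-algebra $\mM=\ell^\infty(G,B)$, and then to apply Hamana's construction of the $G$-injective envelope verbatim inside $\mM$.

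First I would check the elementary properties of $\kappa$. Evaluation at a fixed $g\in G$ is a $^*$-homomorphism, so $\kappa$ is a $^*$-homomorphism into $\mM$; it is unital because $\kappa(1)(g)=g^{-1}1=1$ for every $g$; and it is injective because $\kappa(x)(1)=x$, so $\kappa(x)=0$ forces $x=0$. For the $G$-equivariance one simply computes, for $g,h\in G$ and $x\in A$,
$$(g\kappa(x))(h)=\kappa(x)(g^{-1}h)=(g^{-1}h)^{-1}x=h^{-1}gx=\kappa(gx)(h),$$
so $g\kappa(x)=\kappa(gx)$. Thus $\kappa$ is a $G$-equivariant unital injective $^*$-homomorphism, making $(\mM,\kappa)$ a $G$-$C^*$-algebra extension of $A$.

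Next I would invoke the facts already recorded in the preliminary section: since $B$ is an injective $C^*$-algebra, the $G$-operator system $\ell^\infty(G,B)=\mM$ is $G$-injective, and therefore the set of $\kappa(A)$-projections on $\mM$ (that is, the $G$-equivariant \mbox{u.c.p.} maps $\psi\colon\mM\to\mM$ with $\psi|_{\kappa(A)}=\idd_{\kappa(A)}$) is non-empty: the identity map is one such. This set is partially ordered by the relation $\psi\prec\psi'$ defined earlier, and Hamana's Zorn's lemma argument applied to the associated seminorms on $\mM$ produces a minimal $\kappa(A)$-projection $\varphi\colon\mM\to\mM$.

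Finally I would use the structural conclusions of Hamana's construction that were summarised in the text above: the pair $(\varphi(\mM),\kappa)$ is both $G$-injective and $G$-rigid, and a $G$-injective extension is $G$-rigid if and only if it is $G$-essential, so $(\varphi(\mM),\kappa)$ is a $G$-injective envelope of $A$. Uniqueness of the $G$-injective envelope up to $G$-equivariant complete order isomorphism then identifies $(\varphi(\mM),\kappa)$ with $(I_G(A),\kappa)$, which is the statement of the lemma. There is no genuine obstacle here; the lemma is essentially a bookkeeping statement that Hamana's abstract existence theorem can be realised concretely inside $\ell^\infty(G,B)$, and the only content requiring verification is that the specific map $\kappa(x)(g)=g^{-1}x$ is a $G$-equivariant unital embedding.
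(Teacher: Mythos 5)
Your proposal is correct and follows exactly the route the paper intends: the paper records this lemma without a written proof, deferring to the immediately preceding sketch of Hamana's construction ($\ell^\infty(G,B)$ is $G$-injective when $B$ is injective, a minimal $\kappa(A)$-projection exists by the Zorn's lemma argument, and its image is a $G$-rigid, hence $G$-essential, $G$-injective extension). Your only added content — the verification that $\kappa$ is a $G$-equivariant unital injective $^*$-homomorphism — is the right detail to check and is carried out correctly.
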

\begin{defi}For any injective extension $B$ of a unital $G$-$C^*$-algebra $A$, the map $\kappa\colon A\to\ell^\infty(G,B)$ defined in Lemma~\ref{injyes} will be referred to as the \emph{canonical inclusion map}.\end{defi}

Any injective operator system is unitally and completely order isomorphic to a unital, monotone complete $AW^*$-algebra (see \cite[Theorem~3.1]{choieffros} and \cite[Section 8.2]{saitowright}). For an introduction to $AW^*$-algebras, we refer to \cite{berberian}. 

In our setting, if $S\s M$ are as above and $\varphi\colon M\to M$ is a minimal $S$-projection, then the multiplication on $I_G(S)=\varphi(M)$ is given by the \emph{\mbox{Choi-Effros} product}, i.e., by $$x\circ y=\varphi(xy),\quad x,y\in I_G(S),$$ and the involution and norm on $I_G(S)$ are inherited from $M$ (see \cite[Section 2]{hamana1979} for results on idempotent \mbox{u.c.p.} endomorphisms of unital $C^*$-algebras). Further, if $A$ is a unital $G$-$C^*$-algebra, then $A$ embeds into the $G$-injective envelope as a $G$-invariant unital $C^*$-subalgebra.

In the case when $G=\{1\}$ the above product yields a $C^*$-algebra structure on the injective envelope $I(S)$ of $S$.

The following lemma is a $G$-equivariant version of \cite[Lemma~4.6]{hamana1979}.
\begin{lemma}\label{essencrit}Let $(M,\iota)$ be an extension of a $G$-operator system $S$ and let $\kappa\colon S\to I_G(S)$ denote the canonical inclusion. Then $(M,\iota)$ is $G$-essential if and only if there exists a unital $G$-equivariant complete isometry $\varphi\colon M\to I_G(S)$ such that $\varphi\circ\iota=\kappa$. Moreover, $I_G(S)$ is $G$-equivariantly completely order isomorphic to $I_G(M)$ whenever this holds.\end{lemma}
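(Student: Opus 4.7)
The plan is to handle the biconditional with two applications of the universal properties of $I_G(S)$, then settle the moreover statement by a rigidity argument. Throughout I would rely on $G$-injectivity of $I_G(S)$, $G$-rigidity of the extension $(I_G(S),\kappa)$, and the fact that a $G$-injective extension is $G$-rigid if and only if it is $G$-essential.

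For the forward direction, since $\iota\colon S\to M$ is a $G$-equivariant complete isometry and $I_G(S)$ is $G$-injective, I would extend $\kappa$ to a $G$-equivariant \mbox{u.c.p.} map $\varphi\colon M\to I_G(S)$ with $\varphi\circ\iota=\kappa$. The $G$-essentiality of $(M,\iota)$ then automatically promotes $\varphi$ to a complete isometry, since $\varphi\circ\iota=\kappa$ is already one.

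For the backward direction, given such a $\varphi$ and any $G$-equivariant \mbox{u.c.p.} map $\psi\colon M\to N$ with $\psi\circ\iota$ completely isometric, my strategy is to construct a $G$-equivariant \mbox{u.c.p.} map $\tilde\kappa\colon N\to I_G(S)$ satisfying $\tilde\kappa\circ\psi=\varphi$; since $\tilde\kappa$ is completely contractive and $\varphi$ is a complete isometry, this equation forces $\psi$ itself to be completely isometric. To produce $\tilde\kappa$, I would apply $G$-injectivity of $I_G(S)$ along the complete isometry $\psi\circ\iota\colon S\to N$, obtaining $\tilde\kappa\circ\psi\circ\iota=\kappa$. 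The composite $\tilde\kappa\circ\psi\circ\varphi^{-1}$, defined on $\varphi(M)\subseteq I_G(S)$, then extends by $G$-injectivity to a $G$-equivariant \mbox{u.c.p.} endomorphism $\rho$ of $I_G(S)$. A direct computation shows $\rho$ fixes $\kappa(S)=\varphi(\iota(S))$ pointwise, so $G$-rigidity of $(I_G(S),\kappa)$ forces $\rho=\idd_{I_G(S)}$ and hence $\tilde\kappa\circ\psi=\varphi$ on all of $M$. This double use of $G$-injectivity chained with rigidity is the step I expect to be the main obstacle.

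For the moreover statement, I would show $(I_G(S),\varphi)$ is itself a $G$-injective envelope of $M$. Since $I_G(S)$ is already $G$-injective, it suffices to verify $G$-rigidity: if $\rho\colon I_G(S)\to I_G(S)$ is $G$-equivariant \mbox{u.c.p.} with $\rho\circ\varphi=\varphi$, then composition with $\iota$ yields $\rho\circ\kappa=\rho\circ\varphi\circ\iota=\varphi\circ\iota=\kappa$, forcing $\rho=\idd_{I_G(S)}$ by the $G$-rigidity of $(I_G(S),\kappa)$. Uniqueness of the $G$-injective envelope of $M$ then delivers the desired $G$-equivariant complete order isomorphism between $I_G(M)$ and $I_G(S)$.
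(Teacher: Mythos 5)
Your proof is correct, and the two directions of the biconditional follow the paper's argument almost verbatim: the forward direction is identical, and in the backward direction your map $\tilde\kappa$ and endomorphism $\rho$ are exactly the paper's $\alpha$ and $\varphi'$ (the paper phrases the extension step as producing $\varphi'$ with $\varphi'\circ\varphi=\alpha\circ\psi$ rather than extending $\tilde\kappa\circ\psi\circ\varphi^{-1}$ off $\varphi(M)$, but these are the same application of $G$-injectivity), followed by the same rigidity argument. The only genuine divergence is in the ``moreover'' statement: the paper constructs $\tilde\varphi\colon I_G(M)\to I_G(S)$ and $\psi\colon I_G(S)\to I_G(M)$ directly and applies $G$-rigidity twice to see they are mutually inverse, whereas you verify that $(I_G(S),\varphi)$ is itself a $G$-rigid, $G$-injective (hence $G$-essential) extension of $M$ and then invoke uniqueness of the $G$-injective envelope. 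Your route is slightly cleaner but leans on two facts quoted earlier in the paper (rigid $\Leftrightarrow$ essential for $G$-injective extensions, and uniqueness of $I_G(M)$), while the paper's version is self-contained modulo $G$-injectivity and $G$-rigidity alone; both are perfectly valid.
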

\begin{proof}If $(M,\iota)$ is a $G$-essential extension of $S$, then $G$-injectivity of $I_G(S)$ yields a $G$-equivariant \mbox{u.c.p.} map $\varphi\colon M\to I_G(S)$ satisfying $\varphi\circ\iota=\kappa$. Since $\kappa$ is a complete isometry, so is $\varphi$.

If there exists a completely isometric $G$-morphism $\varphi\colon M\to I_G(S)$ with $\varphi\circ\iota=\kappa$, assume that $\psi\colon M\to N$ is a $G$-equivariant \mbox{u.c.p.} map such that $\psi\circ\iota$ is completely isometric. Then there exists a $G$-equivariant \mbox{u.c.p.} map $\alpha\colon N\to I_G(S)$ such that $\alpha\circ\psi\circ\iota=\kappa$. By $G$-injectivity of $I_G(S)$ there is a $G$-equivariant \mbox{u.c.p.} map $\varphi'\colon I_G(S)\to I_G(S)$ such that $\varphi'\circ\varphi=\alpha\circ\psi$. Since
$$\varphi'\circ\kappa=\varphi'\circ\varphi\circ\iota=\alpha\circ\psi\circ\iota=\kappa,$$ $G$-rigidity yields $\varphi'=\idd_{I_G(S)}$, so that $\alpha\circ\psi=\varphi$. Since $\varphi$ is completely isometric, $\psi$ must be as well.

Whenever $\varphi\colon M\to I_G(S)$ is a $G$-equivariant complete isometry such that $\varphi\circ\iota=\kappa$, the inclusions $\kappa\colon S\to I_G(S)$ and $\kappa'\colon M\to I_G(M)$ allow us to construct $G$-equivariant \mbox{u.c.p.} maps $\tilde\varphi\colon I_G(M)\to I_G(S)$ and $\psi\colon I_G(S)\to I_G(M)$ such that $\tilde\varphi\circ\kappa'=\varphi$ and $\psi\circ\varphi=\kappa'$. Since $\tilde\varphi\circ\psi\circ\kappa=\kappa$ and $\psi\circ\tilde\varphi\circ\kappa'=\psi\circ\varphi=\kappa'$, $G$-rigidity yields that $\psi$ is a $G$-equivariant complete order isomorphism with inverse $\tilde\varphi$.\end{proof}

\begin{rem}\label{starhominj}If $M$ is a $G$-essential $C^*$-algebra extension of $S$, then the completely isometric $G$-morphism $\varphi\colon M\to I_G(S)$ of the above lemma can be chosen to be a $^*$-homomorphism, since $M$ is a $G$-invariant $C^*$-subalgebra of $I_G(M)=I_G(S)$.\end{rem}

\begin{lemma}\label{minimalsmall}Let $S$ be a $G$-operator system and let $M$ be a $G$-injective $G$-operator system with $S\s M$. If $\varphi\colon M\to M$ is an $S$-projection, there exists an $S$-projection $\psi\colon M\to M$ such that $\psi\prec\varphi$ and $\psi(M)=I_G(S)$.\end{lemma}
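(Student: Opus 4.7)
The plan is to apply Hamana's construction of the $G$-injective envelope to the image $\varphi(M)$, viewed as a $G$-injective $G$-operator system in its own right, and then to lift the resulting minimal $S$-projection back to $M$. I will first treat the case where $\varphi$ is idempotent, which is the substantive content; the general case reduces to this via a preliminary step described at the end.

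Assuming $\varphi^2=\varphi$, I first verify that $\varphi(M)$ is a $G$-injective $G$-operator system containing $S$. The inclusion $S\subseteq\varphi(M)$ holds because $\varphi|_S=\idd_S$. For $G$-injectivity, given any $G$-equivariant complete isometry $\kappa\colon E\to F$ and a $G$-equivariant u.c.p.\ map $\alpha\colon E\to\varphi(M)$, I would extend $\alpha$ (viewed as a map into $M$) to $\tilde\alpha\colon F\to M$ using $G$-injectivity of $M$, and then $\varphi\circ\tilde\alpha\colon F\to\varphi(M)$ is the desired $G$-equivariant u.c.p.\ extension, since $\varphi$ is the identity on $\varphi(M)$. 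Hamana's Zorn's lemma argument on the seminorms induced by $S$-projections (recalled in the excerpt) applied inside $\varphi(M)$ then produces an idempotent $S$-projection $\psi_0\colon\varphi(M)\to\varphi(M)$ with $\psi_0(\varphi(M))=I_G(S)$.

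Setting $\psi:=\psi_0\circ\varphi\colon M\to M$, the composition is $G$-equivariant and u.c.p.\ with $\psi|_S=\idd_S$, so $\psi$ is an $S$-projection, and $\psi(M)=\psi_0(\varphi(M))=I_G(S)$. To check $\psi\prec\varphi$, I compute
\[\psi\circ\varphi=\psi_0\circ\varphi^2=\psi_0\circ\varphi=\psi,\qquad \varphi\circ\psi=\varphi\circ\psi_0\circ\varphi=\psi_0\circ\varphi=\psi,\]
where the first chain uses $\varphi^2=\varphi$ and the second uses that $\psi_0$ takes values in $\varphi(M)$, on which $\varphi$ acts as the identity.

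The main obstacle I foresee, should $\varphi$ not be a priori idempotent (which the formal definition of an $S$-projection in the paper permits), is to first produce an idempotent $\tilde\varphi$ with $\tilde\varphi\prec\varphi$. This can be done by a compactness/Zorn argument on the semigroup generated by $\varphi$ in the point-weak topology on $G$-equivariant u.c.p.\ maps $M\to M$, exploiting the monotone complete $AW^*$-algebra structure on $G$-injective operator systems recalled in the excerpt. The idempotent case applied to $\tilde\varphi$, combined with the transitivity of $\prec$ (which is straightforward to verify), then yields $\psi\prec\tilde\varphi\prec\varphi$, completing the argument.
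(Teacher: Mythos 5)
Your argument is essentially the paper's own proof: the paper simply notes that $\varphi(M)$ is a $G$-injective $G$-operator system containing $S=\varphi(S)$, takes an $S$-projection $\Psi$ on $\varphi(M)$ with $\Psi(\varphi(M))=I_G(S)$, and sets $\psi=\Psi\circ\varphi$; your checks of the $G$-injectivity of $\varphi(M)$ and of $\psi\prec\varphi$ only spell out what the paper leaves implicit. Be aware that the paper's proof, like your main argument, tacitly treats $\varphi$ as idempotent (this is needed both for $G$-injectivity of $\varphi(M)$ and for $\varphi\circ\psi=\psi$), and $\varphi$ is indeed idempotent in every application of the lemma; your appended reduction for non-idempotent $\varphi$ is the one fragile point, since a general $G$-injective $M$ is only monotone complete rather than a dual space, so the point-weak compactness you invoke is unavailable and one would instead have to rerun Hamana's seminorm argument to produce an idempotent below $\varphi$.
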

\begin{proof}Since $\varphi(M)$ is a $G$-injective $G$-operator system containing $S=\varphi(S)$, we may let $\Psi\colon\varphi(M)\to\varphi(M)$ be an $S$-projection such that $\Psi(\varphi(M))$ is the $G$-injective envelope of $S$. Define $\psi=\Psi\circ\varphi$.\end{proof}

\begin{rem}\label{nonGinj}Let $A$ be a unital $G$-$C^*$-algebra and let $\alpha\colon G\to\Aut(A)$ be the $G$-action. Writing $\alpha_g=\alpha(g)$ for all $g\in G$, then by injectivity each $\alpha_g\colon A\to A$ extends to a $^*$-isomorphism $I(A)\to I(A)$ which we will also denote by $\alpha_g$. Using rigidity one can show that $\alpha_g\circ\alpha_h=\alpha_{gh}$ on $I(A)$ for all $g,h\in G$, so that $I(A)$ becomes a unital $G$-$C^*$-algebra containing $A$ as a $G$-invariant $C^*$-subalgebra. Further, the inclusion $A\hookrightarrow I(A)$ is a $G$-essential extension of $A$ \cite[Remark 2.6]{hamana1985}.\end{rem}

In general the injective envelope of a unital $C^*$-algebra is not easily determined, even in the non-equivariant case. Some of the simplest cases when the injective envelope can be completely characterized, concern unital $C^*$-algebras that in some way generate an injective $C^*$-algebra monotonically.

\begin{prop}\label{injGX}Let $X$ be a discrete $G$-space. If $B$ is the unitization of $c_0(X)$, then $I(B)=\ell^\infty(X)$. 
If $H\s G$ is an amenable subgroup and $X=G/H$, then $I_G(B)=\ell^\infty(G/H)$, where $\ell^\infty(G/H)$ is equipped with the $G$-action by left translation.\end{prop}
\begin{proof}We adapt an argument of Paulsen \cite[Proposition 3.5]{paulsen2011} to prove the first claim. If $\phi\colon\ell^\infty(X)\to\ell^\infty(X)$ is a unital and positive map such that $\phi$ fixes $c_0(X)$, then any positive $f\in\ell^\infty(X)$ is the supremum of an increasing net of positive functions in $c_0(X)$. Thus $f\leq\phi(f)$. For $r>0$ such that $r1\geq f$, then $r1-f\leq\phi(r1-f)=r1-\phi(f)$, meaning that $f\geq\phi(f)$. Hence $\phi(f)=f$, so that $B\s\ell^\infty(X)$ is a rigid inclusion. Since $\ell^\infty(X)$ is injective, $I(B)=\ell^\infty(X)$.

If $H$ is a subgroup of $G$, we may identify $\ell^\infty(G/H)$ with a $G$-invariant $C^*$-subalgebra of $\ell^\infty(G)$. The second claim then follows from the fact that $\ell^\infty(G)$ is $G$-injective, and that there exists a $G$-equivariant conditional expectation $\ell^\infty(G)\to\ell^\infty(G/H)$ if and only if $H$ is amenable \cite[Corollary~4.4.5]{anantharamandelaroche}.\end{proof}

One of the key results of \cite{kalantarkennedy} is that the maximal ideal space of the $G$-injective envelope $I_G(\CC)$ is the \emph{Furstenberg boundary} $\partial_FG$. For more information on the Furstenberg boundary of a (not necessarily discrete) group, we refer to \cite{furstenberg}.

\subsection{The intersection property}

We recall that for any two $G$-$C^*$-algebras $A$ and $B$, any $G$-equivariant {u.c.p.} map $\pi\colon A\to B$ induces a $G$-equivariant {u.c.p.} map $\pi\rtimes_r\mr{id}\colon A\rtimes_r G\to B\rtimes_r G$ satisfying $(\pi\rtimes_r\mr{id})(a\lambda_s)=\pi(a)\lambda_s$ for all $a\in A$ and $s\in G$.

For any $G$-invariant ideal $I\s A$, let $I\rtimes_r G$ denote the ideal in $A\rtimes_r G$ generated by $I$. Further, if $\pi\colon A\to A/I$ is the quotient map, we define $I\bar\rtimes_r G$ to be the kernel of $\pi\rtimes_r\mr{id}\colon A\rtimes_r G\to (A/I)\rtimes_r G$. It is then evident that $I\rtimes_r G\s I\bar\rtimes_r G$.

The following notion was originally defined by Sierakowski \cite{sierakowski2010}.

\begin{defi}Let $G$ be a group and let $A$ be a $G$-$C^*$-algebra. We say that the $G$-action on $A$ is \emph{exact} if $I\rtimes_r G=I\bar{\rtimes}_r G$ for all $G$-invariant ideals $I\s A$.\end{defi}

A famous result by Kirchberg and Wassermann states that the reduced group $C^*$-algebra of a \emph{discrete} group $G$ is an exact $C^*$-algebra if and only if any action of $G$ on a $G$-$C^*$-algebra is exact (cf. \cite{kirchbergwassermann}).

The following notion, coined by Svensson and Tomiyama in \cite{svenssontomiyama}, has proven to be instrumental in unmasking the ideal structure of a reduced crossed product.

\begin{defi}Let $G$ be a group and let $A$ be a $G$-$C^*$-algebra. We say that the action of $G$ on $A$ has the \emph{intersection property} if every non-zero ideal of the reduced crossed product $A\rtimes_r G$ has non-zero intersection with $A$. The action of $G$ on $A$ is said to have the \emph{residual intersection property} if it holds for all $G$-invariant ideals $I\s A$ that the induced action of $G$ on $A/I$ has the intersection property.\end{defi}

Recall that the action of a group $G$ on a topological space $X$ is said to be \emph{free} if for all $x\in X$ and $g\in G$, $gx=x$ implies $g=1$, and \emph{topologically free} if no $g\in G\sm\{1\}$ fixes any non-empty open subset of $X$ pointwise. In \cite{kawamuratomiyama}, Kawamura and Tomiyama proved for any discrete, amenable group $G$ acting on a compact Hausdorff space $X$ that the action of $G$ on $C(X)$ has the intersection property if and only if the action of $G$ on $X$ is topologically free \cite[Theorem 4.1]{kawamuratomiyama}. In \cite{archboldsp} Archbold and Spielberg defined a generalized notion of topological freeness for actions of discrete groups on arbitrary $C^*$-algebras, in such a way that topological freeness still implied the intersection property.

The following structure theorem of Sierakowski \cite[Theorem~1.10]{sierakowski2010} gives a picture of exactly when the ideal structure of a re\-duced crossed pro\-duct can be completely understood. A $G$-$C^*$-algebra $A$ is said to \emph{separate the ideals} of $A\rtimes_r G$ if the map $I\mapsto I\rtimes_r G$ from the set of $G$-invariant ideals in $A$ to the set of ideals in $A\rtimes_r G$ is a bijection.
\begin{theorem}Let $A$ be a $G$-$C^*$-algebra. Then $A$ separates the ideals in $A\rtimes_r G$ if and only if the action of $G$ on $A$ is exact and has the residual intersection property.\end{theorem}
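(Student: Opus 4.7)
The plan is to prove both implications by exploiting the basic identity that for any $G$-invariant ideal $I\s A$ one has $(I\rtimes_r G)\cap A = I$ and more generally $(I\bar\rtimes_r G)\cap A = I$. This holds because $I\s (I\rtimes_r G)\cap A\s (I\bar\rtimes_r G)\cap A$ and $(I\bar\rtimes_r G)\cap A$ is the kernel of $\pi\rtimes_r\idd$ restricted to $A$, which equals $\ker\pi = I$. In particular, the assignment $I\mapsto I\rtimes_r G$ is always injective, so the theorem reduces to studying when it is surjective and how surjectivity relates to the two structural hypotheses.

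For the ``if'' direction, assume the action is exact and has the residual intersection property, and fix an ideal $J\s A\rtimes_r G$. Setting $I := J\cap A$ produces a $G$-invariant ideal in $A$, and exactness gives $I\rtimes_r G = I\bar\rtimes_r G$, so the canonical surjection $A\rtimes_r G \to (A/I)\rtimes_r G$ has kernel $I\rtimes_r G$, yielding the isomorphism $(A\rtimes_r G)/(I\rtimes_r G)\cong (A/I)\rtimes_r G$. The image $\bar J$ of $J$ in this quotient is an ideal, and its intersection with $A/I$ equals the image of $J\cap A = I$, which is zero. The residual intersection property applied to $A/I$ forces $\bar J = 0$, so $J\s I\rtimes_r G$; since the reverse inclusion is immediate, $J = I\rtimes_r G$.

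For the ``only if'' direction, assume that the map $I\mapsto I\rtimes_r G$ is bijective. To establish exactness, fix a $G$-invariant ideal $I\s A$; since $I\bar\rtimes_r G$ is an ideal of $A\rtimes_r G$, surjectivity provides a $G$-invariant ideal $I'$ with $I\bar\rtimes_r G = I'\rtimes_r G$, and intersecting with $A$ gives $I' = (I'\rtimes_r G)\cap A = (I\bar\rtimes_r G)\cap A = I$, so $I\rtimes_r G = I\bar\rtimes_r G$. Next, for the residual intersection property, fix a $G$-invariant ideal $I\s A$ and a nonzero ideal $\bar J\s (A/I)\rtimes_r G$; using the just-established exactness to identify $(A/I)\rtimes_r G$ with $(A\rtimes_r G)/(I\rtimes_r G)$, lift $\bar J$ to an ideal $J\s A\rtimes_r G$ strictly containing $I\rtimes_r G$. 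Surjectivity writes $J = I_0\rtimes_r G$ for some $G$-invariant ideal $I_0$, and injectivity of the correspondence forces $I\subsetneq I_0$. Passing to $A/I$, the nonzero ideal $I_0/I$ lies inside $\bar J\cap (A/I)$, verifying the intersection property for $A/I$.

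The main subtlety is the identification $(A\rtimes_r G)/(I\rtimes_r G)\cong (A/I)\rtimes_r G$, which hinges on the fact that the canonical surjection $A\rtimes_r G\to (A/I)\rtimes_r G$ has kernel exactly $I\bar\rtimes_r G$; once this is in hand and exactness has collapsed the two ideals $I\rtimes_r G$ and $I\bar\rtimes_r G$, the rest of the argument is a routine exercise in chasing ideals through the Galois-style correspondence $I\leftrightarrow I\rtimes_r G$.
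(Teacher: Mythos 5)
The paper does not actually prove this statement: it is quoted verbatim from Sierakowski (his Theorem~1.10), so there is no in-paper proof to compare against. Your argument is correct and is essentially the standard one: the identity $(I\rtimes_r G)\cap A=I$ (which you derive from $(I\bar\rtimes_r G)\cap A=\ker\pi=I$) gives injectivity of $I\mapsto I\rtimes_r G$ unconditionally, and the two hypotheses are exactly what is needed to chase ideals through the quotient $(A\rtimes_r G)/(I\rtimes_r G)\cong (A/I)\rtimes_r G$. The only step I would ask you to spell out is the claim in the ``if'' direction that the image $\bar J$ of $J$ satisfies $\bar J\cap (A/I)=\{0\}$: in general the intersection of an image need not equal the image of the intersection, but here it does because $\ker(\pi\rtimes_r\idd)=I\bar\rtimes_r G=I\rtimes_r G\s J$ (the equality by exactness, the inclusion since $I=J\cap A\s J$), so $q^{-1}(q(J))=J$ and any $a\in A$ with $q(a)\in q(J)$ lies in $J\cap A=I$. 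With that sentence inserted, the proof is complete.
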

It was proved by M. Kennedy and the author in \cite{bryderkennedy} that for any $C^*$-simple discrete group $G$ (i.e., the reduced group $C^*$-algebra $C^*_r(G)$ is simple) and any $G$-$C^*$-algebra $A$, $A$ separates the \emph{maximal} ideals in $A\rtimes_r G$.

The class of exact groups is comparatively large -- indeed it was not known until 2003 whether non-exact groups did exist, when Gromov gave the first example of a finitely generated, non-exact discrete group, \cite{gromov}. Due to the above theorem and the abundance of exact groups (ensuring exactness of group actions), our aim will be to find criteria for the group action to satisfy the (residual) intersection property. As noted by Sierakowski in the aforementioned paper, the action of $G$ on $A$ has the residual intersection property whenever $I\s (I\cap A)\bar\rtimes_r G$ for any ideal $I\s A\rtimes_r G$ (equality holds when the action is also exact).

Recall for any $C^*$-algebras $A$ and $B$ and any completely positive map $\varphi\colon A\to B$ that the \emph{multiplicative domain} $\mult(\varphi)$ of $\varphi$ is given by \begin{align*}\mult(\varphi)&=\{a\in A\,|\,\varphi(a^*a)=\varphi(a)^*\varphi(a),\ \varphi(aa^*)=\varphi(a)\varphi(a)^*\}.\end{align*}As can be seen in \cite[Proposition~1.5.7]{brownozawa}, $\varphi$ in fact satisfies $$\varphi(ab)=\varphi(a)\varphi(b),\quad\varphi(ba)=\varphi(b)\varphi(a),\quad a\in\mult(\varphi),\ b\in A.$$

One key observation to be applied in the next section comes in the form of the following lemma, adapted from \cite[Lemma~7.3]{kalantarkennedyozbr}.

\begin{lemma}\label{google}Let $A$ be a $G$-$C^*$-algebra and let $X$ denote the maximal ideal space of $Z(A)$. If the action of $G$ on $X$ is free, then $I\s (I\cap A)\bar\rtimes_r G$ for all closed ideals $I\s A\rtimes_r G$.\end{lemma}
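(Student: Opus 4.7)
The plan is to show that the canonical conditional expectation $E\colon A\rtimes_r G\to A$ sends $I$ into $L:=I\cap A$. Granted this, for any $k\in I$ and $g\in G$ we would have $k\lambda_{g^{-1}}\in I$, hence $E(k\lambda_{g^{-1}})\in L$; equivalently, every Fourier coefficient of $(\pi\rtimes_r\mr{id})(k)$ vanishes in $A/L$, where $\pi\colon A\to A/L$ is the quotient map. Faithfulness of the canonical conditional expectation on $(A/L)\rtimes_r G$ would then force $(\pi\rtimes_r\mr{id})(k)=0$, i.e., $k\in L\bar\rtimes_r G$.

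To prove $E(I)\s L$, the idea is to fix $k\in I$ and show $\varphi(E(k))=0$ for every pure state $\varphi$ of $A$ vanishing on $L$ (such states separate $A/L$, so this is enough). Setting $\omega:=\varphi\circ E$, a state on $A\rtimes_r G$ extending $\varphi$, the key step would be to show that $\omega$ is in fact the \emph{unique} state on $A\rtimes_r G$ extending $\varphi$. Granting this: since $\varphi|_L=0$, $\varphi$ descends to a state $\bar\varphi$ on the image of $A$ in $(A\rtimes_r G)/I$, which is canonically isomorphic to $A/L$; Hahn--Banach extends $\bar\varphi$ to a state $\eta$ on $(A\rtimes_r G)/I$, and the pullback $\eta\circ\sigma$ along the quotient map $\sigma\colon A\rtimes_r G\to(A\rtimes_r G)/I$ is a state extending $\varphi$ and vanishing on $I$; uniqueness forces $\omega=\eta\circ\sigma$, so $\omega(k)=0$.

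The uniqueness of the extension of $\varphi$ I expect to be the main obstacle. To handle it, let $\tilde\varphi$ be any state on $A\rtimes_r G$ with $\tilde\varphi|_A=\varphi$. Since $\varphi$ is pure, $\varphi|_{Z(A)}$ is a character corresponding to some $x_0\in X$. For each $g\in G\sm\{1\}$, freeness yields $gx_0\ne x_0$, and Urysohn's lemma furnishes $f\in Z(A)^+$ with $0\le f\le 1$, $f(x_0)=1$, and $\supp f\cap g\supp f=\emptyset$, whence $f\cdot gf=0$. Since $\tilde\varphi(f)=f(x_0)=1=\tilde\varphi(1)$ and $0\le f\le 1$, an application of Cauchy--Schwarz places $f$ in the multiplicative domain of $\tilde\varphi$. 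Using the covariance $\lambda_g f=(gf)\lambda_g$ and the fact that $gf\in Z(A)$ commutes with $a\in A$, one finds
\[
f(a\lambda_g)f=fa(gf)\lambda_g=(f\cdot gf)a\lambda_g=0,\qquad a\in A,
\]
so the multiplicative-domain property gives $\tilde\varphi(a\lambda_g)=\tilde\varphi(f(a\lambda_g)f)=0$. Letting $g\in G\sm\{1\}$ vary (with a $g$-dependent choice of $f$) and using density of the algebraic crossed product, this gives $\tilde\varphi=\varphi\circ E=\omega$. The freeness of the $G$-action on $X$ enters precisely through the construction of the central Urysohn function $f$, which lets us annihilate the non-identity Fourier coefficients of $\tilde\varphi$ one group element at a time.
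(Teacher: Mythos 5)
Your argument is correct and takes essentially the same route as the paper's: both proofs reduce the statement to showing $E(I)\subseteq I\cap A$ by extending a representation/state of $A$ that kills $I$ to a u.c.p. map on $A\rtimes_r G$, using a central Urysohn-type function together with the multiplicative-domain trick and freeness of the action on $X$ to annihilate the coefficients of $\lambda_g$ for $g\neq 1$, and then concluding via faithfulness of the canonical conditional expectation on $(A/(I\cap A))\rtimes_r G$. The paper packages this with irreducible representations of $A/(I\cap A)$ extended by Arveson's theorem (so that $\varphi|_{Z(A)}$ is a character by irreducibility) rather than with pure states and uniqueness of state extensions, but the underlying mechanism is identical.
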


\begin{proof}Let $I_A=I\cap A$ and let $\pi\colon A\to A/I_A$ be the quotient map. Now let $\gamma\colon A/I_A\to B(H)$ be an irreducible representation of $A/I_A$ and consider the following representation
$$A+I\to (A+I)/I\cong A/I_A\stackrel{\gamma}{\to} B(H).$$ Due to Arveson's extension theorem this map extends to a {u.c.p.} map $\varphi\colon A\rtimes_r G\to B(H)$ such that $\varphi(I)=0$ and $A\s\mr{mult}(\varphi)$, since $\varphi|_A=\gamma\circ\pi$. By irreducibility, the restriction of $\varphi$ to $Z(A)$ is a point mass on $X$, i.e., $\varphi|_{Z(A)}=\delta_x$ for some $x\in X$. Letting $g\in G\sm\{1\}$, then there exists $f\in Z(A)\cong C(X)$ such that $f(g^{-1}x)\neq f(x)$, implying $$\varphi(\lambda_g)f(x)=\varphi(\lambda_gf)=\varphi(gf\lambda_g)=f(g^{-1}x)\varphi(\lambda_g).$$
Therefore $\varphi(\lambda_g)=0$. Letting $E\colon A\rtimes_r G\to A$ be the canonical conditional expectation (see, e.g., \cite[Proposition~4.1.9]{brownozawa}), it follows that $\varphi=\varphi\circ E$. Hence $$\gamma(\pi(E(I)))=\varphi(E(I))=\varphi(I)=\{0\}.$$ Since $\gamma$ was arbitrary, $\pi(E(I))=\{0\}$, so that $E(I)\s I_A$.

For any positive element $x\in I$, let $y$ be the image of $x$ under the $^*$-ho\-mo\-mor\-phism $\pi\rtimes_r\mr{id}\colon A\rtimes_r G\to (A/I_A)\rtimes_r G$ and let $E'\colon (A/I_A)\rtimes_r G\to A/I_A$ be the canonical faithful conditional expectation. Since $E'\circ(\pi\rtimes_r\mr{id})=\pi\circ E$, it follows that $E'(y)=0$ since $E(x)\in I_A$. As $E'$ is faithful, $y=0$ and $x\in I_A\bar\rtimes_r G$.\end{proof}

The following example is adapted from \cite[p.\ 9]{delaharpeskandalis}.
\begin{rem}\label{nonint}Let $G$ be a non-trivial discrete group, let $H$ be a non-tri\-vial amenable subgroup of $G$, and let $X=G/H$ be the left coset space. The unitization $B$ of the non-unital commutative $C^*$-algebra $c_0(X)$ becomes a unital $G$-$C^*$-algebra with exactly two proper $G$-invariant ideals, namely $\{0\}$ and $c_0(X)$. By Green's imprimitivity theorem, the ideal $J=c_0(X)\rtimes_r G$ of $B\rtimes_r G$ is Morita equivalent to $C^*_r(H)\cong C^*(H)$. As $C^*(H)$ has a proper non-zero ideal (e.g., the kernel of the trivial representation $C^*(H)\to\CC$), then so does $J$, say $I$, which is then also a proper non-zero ideal of $B\rtimes_r G$. However $I\cap B=\{0\}$, so the action of $G$ on $B$ does not have the intersection property.

As was also mentioned in \cite[Remark 4.10]{bryderkennedy}, $B$ is $G$-prime but $B\rtimes_r G$ may not be prime. For instance, if $H$ were abelian, then $C^*(H)$ contains two non-zero ideals that intersect only in $0$, so that the above-mentioned Morita equivalence yields two ideals $I$ and $J$ in $c_0(X)\rtimes_r G$ with $I\cap J=\{0\}$. We then note that $I$ and $J$ are also ideals in $B\rtimes_r G$.
\end{rem}

\subsection{Simplicity of reduced group \texorpdfstring{$C^*$}{C*}-algebras}
Our main reason for considering $G$-injective envelopes is the following result by Breuillard, Kalantar, Kennedy and Ozawa \cite[Theorem~3.1]{kalantarkennedyozbr} (see also \cite{kalantarkennedy}). Recall that a discrete group is said to be \emph{$C^*$-simple} if the reduced group $C^*$-algebra $C_r^*(G)$ is a simple $C^*$-algebra. 
\begin{theorem}\label{kk}A discrete group $G$ is $C^*$-simple if and only if the action of $G$ on the maximal ideal space $\partial_FG$ of $I_G(\CC)$ is free.\end{theorem}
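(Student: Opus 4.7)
The plan is to leverage the commutativity of $I_G(\CC)$ together with Lemma~\ref{google} and Theorem~\ref{essip}. I would first observe that $I_G(\CC)$ is commutative: the canonical inclusion $\CC\hookrightarrow\ell^\infty(G)$ realises $\CC$ inside a commutative injective $G$-$C^*$-algebra, and by Lemma~\ref{minimalsmall} there exists a $\CC$-projection $\psi\colon\ell^\infty(G)\to\ell^\infty(G)$ with $I_G(\CC)=\psi(\ell^\infty(G))$. Since the Choi--Effros product reads $x\circ y=\psi(xy)=\psi(yx)=y\circ x$, commutativity is inherited, giving $I_G(\CC)\cong C(X)$ for a compact Hausdorff space $X$, namely the maximal ideal space.

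For the direction freeness $\Ra$ $C^*$-simplicity, I would apply Lemma~\ref{google} with $A=I_G(\CC)$ (so $Z(A)=A$ has spectrum $X$). Freeness of the $G$-action on $X$ forces $I\s(I\cap A)\bar\rtimes_r G$ for every closed ideal $I\s A\rtimes_r G$; taking a nonzero $I$ with $I\cap A=0$ would give $I\s 0\bar\rtimes_r G=\{0\}$, so the $G$-action on $I_G(\CC)$ has the intersection property. Since $I_G(\CC)$ is a $G$-essential $C^*$-algebra extension of $\CC$, Theorem~\ref{essip} transfers the intersection property to the action of $G$ on $\CC$. Every nonzero ideal of $C^*_r(G)=\CC\rtimes_r G$ then meets $\CC$ in a nonzero ideal of $\CC$, hence contains $1$, so $C^*_r(G)$ is simple.

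For the converse direction, I would argue by contrapositive: assume $gx=x$ for some $g\in G\sm\{1\}$ and $x\in X$. The aim is to produce an obstruction to simplicity from the fixed point. Concretely, I would form the state $\varphi=\delta_x\circ E$ on $I_G(\CC)\rtimes_r G$, where $E$ is the canonical conditional expectation onto $I_G(\CC)$, and note that its restriction to $C^*_r(G)$ coincides with the canonical trace $\tau$, while its $\langle g\rangle$-invariance arising from $gx=x$ places extra rigidity on $\varphi$. Combined with $G$-rigidity of $I_G(\CC)$ (Lemma~\ref{essencrit}), which forces the $G$-action on $X$ to be minimal and strongly proximal, a Powers-type averaging argument along a suitable net in $G$ should yield a second state on $C^*_r(G)$, or equivalently a nontrivial invariant quotient, contradicting $C^*$-simplicity via the unique trace property.

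The main obstacle is this forward direction. Its technical crux is the identification of $X$ as a $G$-boundary --- essentially the Furstenberg boundary $\partial_F G$ --- together with a strong proximality/averaging argument that propagates the fixed-point assumption through the inclusion $C^*_r(G)\hookrightarrow I_G(\CC)\rtimes_r G$. Developing these boundary-theoretic inputs from $G$-rigidity alone is the bulk of the work in the Kalantar--Kennedy theorem; once available, the passage from a fixed point to a failure of simplicity is relatively direct, but without them one has only the easy reverse implication afforded by Lemma~\ref{google} and Theorem~\ref{essip}.
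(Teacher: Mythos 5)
This theorem is not proved in the paper at all: it is quoted from Breuillard--Kalantar--Kennedy--Ozawa \cite[Theorem~3.1]{kalantarkennedyozbr} (see also \cite{kalantarkennedy}), so there is no internal proof to compare against. Judged on its own terms, your proposal establishes only one of the two implications. The direction ``freeness $\Ra$ $C^*$-simplicity'' is correct and is in fact a nice self-contained derivation from the paper's own machinery: commutativity of $I_G(\CC)$ (as in Remark~\ref{comm}), Lemma~\ref{google} applied with $A=Z(A)=I_G(\CC)$ to get the intersection property for the action on $I_G(\CC)$, and Theorem~\ref{essip} to pull it back to the trivial action on $\CC$, whence every non-zero ideal of $C^*_r(G)$ contains a non-zero scalar. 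Neither Lemma~\ref{google} nor Theorem~\ref{essip} depends on the present theorem, so there is no circularity.

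The converse, ``$C^*$-simplicity $\Ra$ freeness,'' is a genuine gap, and you acknowledge as much. Concretely: (1) the identification of the maximal ideal space $X$ of $I_G(\CC)$ with the Furstenberg boundary $\partial_FG$, and in particular the minimality and strong proximality of the $G$-action on $X$, do not follow from Lemma~\ref{essencrit} alone --- $G$-rigidity gives uniqueness of the $G$-equivariant u.c.p.\ map $C(X)\to C(X)$ fixing $\CC$, but turning that into boundary dynamics is precisely \cite[Theorem~3.11]{kalantarkennedy} and requires separate work; (2) the proposed route from a fixed point $gx=x$ to a failure of simplicity conflates simplicity with the unique trace property: producing ``a second state on $C^*_r(G)$'' whose restriction differs from $\tau$ would obstruct uniqueness of the trace, not simplicity, and at this level of generality the two properties are not interchangeable; (3) the ``Powers-type averaging argument along a suitable net'' is exactly the technical heart of the Breuillard--Kalantar--Kennedy--Ozawa proof and cannot be waved at --- one needs either the averaging characterisation of $C^*$-simplicity or the amenability of stabilizers of boundary points together with an Archbold--Spielberg-type argument for the crossed product $C(\partial_FG)\rtimes_r G$. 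As written, the proposal proves one implication and defers the other to the literature it is meant to reprove.
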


\begin{rem}Let $G$ be a discrete group. Because $\ell^\infty(G/H)$ is $G$-injective for all amenable subgroups $H$ of $G$, as shown in Proposition \ref{injGX}, we observe that we can actually give a new proof of \cite[Theorem~5.3]{kennedy2015} of Kennedy. A subgroup $H$ of $G$ is said to be \emph{residually normal} if there exists a finite subset $F\s G\sm\{1\}$ such that
$F\cap gHg^{-1}\neq\oo$ for all $g\in G$. Kennedy proved that $G$ is $C^*$-simple if and only if it contains no amenable, residually normal subgroups. 

Suppose that $G$ has an amenable, residually normal subgroup $H$, and set $X=G/H$. By $G$-injectivity and $G$-essentiality, there exists a $G$-equivariant \mbox{u.c.p.} map $C(\partial_FG)\to\ell^\infty(X)$. If $\beta X$ is the Stone-\v{C}ech compactification of $X$, the dual map of this map restricts to a $G$-equivariant continuous map $\beta X\to\mathrm{Prob}(\partial_FG)$. By assumption, there exists a finite subset $F\s G\sm\{1\}$ such that $F\cap gHg^{-1}\neq\oo$ for all $g\in G$. Therefore, for all $g\in G$, there exists $s\in F$ and $h\in H$ such that $s=ghg^{-1}$. This implies $sgH=ghH=gH$, meaning that $gH\in X^s$, where $X^s$ denotes the set of elements in $X$ fixed by $s$. Therefore $X\s\bigcup_{s\in F}X^s$, so $\beta X\s\bigcup_{s\in F}\ov{X^s}=\bigcup_{s\in F}(\beta X)^s$, since $X$ is dense in $\beta X$. Letting $M\s\beta X$ be a minimal closed $G$-invariant subset, then $M\s\bigcup_{s\in F}M^s$. By \cite[Proposition 4.2]{furstenberg}, the map $\beta X\to\mathrm{Prob}(\partial_FG)$ restricts to a continuous $G$-equivariant surjection $M\to\partial_FG$, if we identify $\partial_FG$ with the subspace of point masses in $\mathrm{Prob}(\partial_FG)$. Thus $\partial_FG\s\bigcup_{s\in F}(\partial_FG)^s$, and since $F\s G\sm\{1\}$, the action of $G$ on $\partial_FG$ is not free, meaning that $G$ is not $C^*$-simple.

Conversely, if $G$ is not $C^*$-simple, then the action of $G$ on $\partial_FG$ is not topologically free \cite[Theorem~6.2]{kalantarkennedy}, so there exists $s\in G$ such that $(\partial_FG)^s$ has non-empty interior. By minimality of the action of $G$ of $\partial_FG$ \cite[Proposition~3.4]{kalantarkennedy} and compactness of $\partial_FG$, there is a finite set $F\s G\sm\{1\}$ such that $\partial_FG\s\bigcup_{t\in F}(\partial_FG)^t$. Fixing $x\in\partial_FG$, let $G_x=\{g\in G\,|\,gx=x\}$ denote the isotropy subgroup with respect to $x$. For any $g\in G$, there exists $s\in F$ such that $gx\in(\partial_FG)^s$, which implies $s\in G_{gx}=gG_xg^{-1}$. This proves that $G_x$ is residually normal, and since $G_x$ is amenable \cite[Proposition~2.7]{kalantarkennedyozbr}, we obtain Kennedy's result.\end{rem}

\section{The intersection property for essential extensions}

We now consider the question of whether the intersection property of an action of a discrete group $G$ on a $C^*$-algebra ascends to $G$-essential extensions. The proof of the following lemma is inspired by \cite[Lemma 7.2]{kalantarkennedyozbr}.

\begin{lemma}\label{ip1}Let $A$ be a $G$-$C^*$-algebra and let $B$ be a $G$-essential $C^*$-algebra extension of $A$. For any ideal $I\s A\rtimes_r G$, let $J$ be the ideal in $B\rtimes_r G$ generated by $I$. Then $I\cap A=\{0\}$ if and only if $J\cap B=\{0\}$.\end{lemma}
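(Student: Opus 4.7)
The easy implication is $J\cap B=\{0\}\Rightarrow I\cap A=\{0\}$: since $I\subseteq J$ and $A\subseteq B$, we have $I\cap A\subseteq J\cap A\subseteq J\cap B=\{0\}$. For the converse, assume $I\cap A=\{0\}$; the plan is to construct a $G$-equivariant u.c.p.\ map $\rho\colon B\rtimes_r G\to I_G(A)$ which kills $J$ and is injective on $B$, so that $J\cap B\subseteq\ker(\rho|_B)=\{0\}$.

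Observe that $A+I$ is a $G$-invariant closed $^*$-subalgebra of $A\rtimes_r G\subseteq B\rtimes_r G$ and, because $I\cap A=\{0\}$, the quotient map $\pi\colon A+I\to (A+I)/I\cong A$, $a+x\mapsto a$, is a $G$-equivariant $^*$-homomorphism with kernel $I$. Composition with the inclusion $A\hookrightarrow I_G(A)$ produces a $G$-equivariant $^*$-homomorphism $\psi\colon A+I\to I_G(A)$, which by $G$-injectivity of $I_G(A)$ extends to a $G$-equivariant u.c.p.\ map $\rho\colon B\rtimes_r G\to I_G(A)$.

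The key step is to force both $I$ and $B$ into the multiplicative domain $\mathrm{mult}(\rho)$. Since $\rho|_{A+I}=\psi$ is a $^*$-homomorphism, $A+I\subseteq\mathrm{mult}(\rho)$; in particular $\rho(I)=0$, and $\rho|_A$ coincides with the inclusion $A\hookrightarrow I_G(A)$. Lemma~\ref{essencrit} together with Remark~\ref{starhominj} furnishes a canonical $G$-equivariant $^*$-embedding $\kappa\colon B\hookrightarrow I_G(A)$. Both $\kappa$ and $\rho|_B$ are $G$-equivariant u.c.p.\ extensions of $A\hookrightarrow I_G(A)$, so extending each to a $G$-equivariant u.c.p.\ endomorphism of $I_G(A)$ via $G$-injectivity, $G$-rigidity of $I_G(A)$ forces both extensions to be $\mathrm{id}_{I_G(A)}$; hence $\rho|_B=\kappa$, an injective $^*$-homomorphism, and therefore $B\subseteq\mathrm{mult}(\rho)$.

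Finally, $\mathrm{mult}(\rho)$ is a $C^*$-subalgebra of $B\rtimes_r G$ containing $B\cup I$, hence it contains $BIB$. For any $b_1xb_2\in BIB$ with $x\in I$, multiplicativity on $\mathrm{mult}(\rho)$ yields $\rho(b_1xb_2)=\rho(b_1)\rho(x)\rho(b_2)=0$; since $b_1xb_2\in\mathrm{mult}(\rho)$, the bimodule property of the multiplicative domain gives $\rho(y\cdot b_1xb_2)=\rho(b_1xb_2\cdot y)=0$ for every $y\in B\rtimes_r G$. By continuity and linearity, $\rho$ vanishes on the closed two-sided ideal generated by $BIB$, and since $1\in B$ forces $I\subseteq BIB$, that ideal equals $J$. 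Thus $\rho(J)=0$, and combined with injectivity of $\rho|_B$ one concludes $J\cap B=\{0\}$. The main obstacle is exactly the multiplicativity of $\rho|_B$: a naive argument from $I\subseteq\mathrm{mult}(\rho)$ alone does not imply $\rho(J)=0$ (simple $2\times 2$-matrix examples show $c\in\mathrm{mult}(\rho)$ with $\rho(c)=0$ need not push the two-sided ideal generated by $c$ into $\ker\rho$), so $G$-essentiality of $B$ over $A$ must genuinely be used---via $G$-rigidity of $I_G(A)$---to place $B$ inside the multiplicative domain.
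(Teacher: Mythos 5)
Your overall strategy is sound and genuinely different from the paper's. The paper extends a $^*$-homomorphism $\pi\colon A\rtimes_r G\to\mM$ with kernel $I$ (into an auxiliary $G$-injective algebra) to a u.c.p.\ map $\tilde\pi$ on $B\rtimes_r G$, and then --- because $B$ only lands in the multiplicative domain of the retraction $\varphi$ ``modulo $\ker\varphi$'' --- it must pass to a quotient $D/L$ in order to manufacture a genuine $^*$-homomorphism $\Phi$ on $B\rtimes_r G$ that kills $I$ (hence $J$) and is faithful on $B$. You instead map directly into $I_G(A)$, which lets the rigidity argument give $\rho|_B=\kappa$ \emph{exactly}, so that both $B$ and $I$ lie in $\mult(\rho)$ for a single map; this avoids the quotient construction entirely and is more economical. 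Everything up to and including ``$B\s\mult(\rho)$'' is correct.

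The final step, however, has a gap of precisely the kind you warn against in your closing sentence. From $c\in\mult(\rho)$ with $\rho(c)=0$ for $c\in BIB$ you correctly get $\rho(yc)=\rho(cy)=0$ for all $y$, but ``by continuity and linearity'' this only shows that $\rho$ kills the closed \emph{left} and \emph{right} ideals generated by $BIB$; the two-sided ideal $J$ is the closed span of the products $ycz$, and your own $2\times 2$ example shows a u.c.p.\ map need not vanish on such two-sided products even when the middle factor is in the multiplicative domain and is killed. The gap is reparable, but it requires an extra observation that uses $G$-invariance: the closed right ideal $\ov{\Span}\bigl(BIB\cdot(B\rtimes_r G)\bigr)$ is already two-sided, because for $b_g,b_1,b_2\in B$ and $x\in I$ one has $b_g\lambda_g\,b_1xb_2=b_g(gb_1)(gx)(gb_2)\lambda_g\in BIB\cdot C^*_r(G)$, where $gx\in I$ and $gb_i\in B$ since $I$ and $B$ are invariant under $\mathrm{Ad}\,\lambda_g$; hence left multiplication by elements of $B\rtimes_r G$ preserves this right ideal, which therefore equals $J$. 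Every element of $J$ is then a limit of sums $\sum_k c_ku_k$ with $c_k\in BIB\s\mult(\rho)$ and $\rho(c_k)=0$, so $\rho(J)=\{0\}$ by the one-sided bimodule property alone, and the proof closes as you intend.
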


\begin{proof}The ``if'' part is trivial. 
For the converse, we may assume that $B$ is a unital $G$-invariant $C^*$-subalgebra of $I_G(A)$ containing $A$, due to Remark \ref{starhominj}. Let $\pi\colon A\rtimes_r G\to\mM$ be a $G$-equivariant unital $^*$-homomorphism with $\ker\pi=I$, where $\mM$ is a $G$-injective $G$-$C^*$-algebra (for instance, we may take $\mM=\ell^\infty(G,B(H))$ where $(A\rtimes_r G)/I$ is represented faithfully on $B(H)$). By $G$-injectivity, we can extend $\pi$ to a $G$-equivariant \mbox{u.c.p.} map $\tilde\pi\colon B\rtimes_r G\to\mM$.

Let $J$ be the ideal in $B\rtimes_r G$ generated by $I$ and assume that $I\cap A=\{0\}$. As $I_G(A)$ is $G$-injective and $\pi$ is completely isometric on $A$, there is a $G$-equivariant \mbox{u.c.p.} map $\varphi\colon\mM\to I_G(A)$ such that $\varphi\circ\tilde\pi|_A=\varphi\circ\pi|_A$ is the inclusion of $A$ into $I_G(A)$. Now let $\psi\colon I_G(A)\rtimes_r G\to\mM$ be a $G$-equivariant \mbox{u.c.p.} map such that $\psi|_{B\rtimes_r G}=\tilde\pi$, so that the following diagram commutes:
\[\begin{tikzcd}
A\arrow[d]\arrow[r,hook]\arrow[dd,hook,bend right=60]&B\arrow[d]\arrow[r,hook]&I_G(A)\arrow[d]\\
A\rtimes_r G\arrow[r]\arrow[dr,"\pi"]&B\rtimes_r G\arrow[r]\arrow[d,"\tilde\pi"]&I_G(A)\rtimes_r G\arrow[dl,dashed,"\psi"]\\
I_G(A)&\mM\arrow[l,dashed,"\varphi"]&\end{tikzcd}\]
By $G$-rigidity, then since $\varphi\circ\psi|_A$ is the inclusion map $A\to I_G(A)$, it follows that $\varphi\circ\psi|_{I_G(A)}=\idd_{I_G(A)}$. In particular, $\varphi\circ\tilde\pi|_B$ is the identity map on $B$. 
It follows that $\tilde\pi(B)\s\mr{mult}(\varphi)$, so that $\varphi$ is a $^*$-homomorphism on $C^*(\tilde\pi(B))$. Equipping $C^*(\tilde\pi(B))$ with the $G$-action given by conjugation by the unitaries $\pi(\lambda_g)$, $K=\ker(\varphi|_{C^*(\tilde\pi(B))})$ is a $G$-invariant ideal in $C^*(\tilde\pi(B))$. Now define $$D=C^*(\tilde\pi(B\rtimes_rG))=\ov{\Span}(C^*(\tilde\pi(B))\cdot\pi(C^*_r(G)))$$ and $$L=\ov{\Span}(K\cdot\pi(C^*_r(G))).$$
Both $D$ and $L$ are $G$-invariant $C^*$-subalgebras of $\mM$. For any $g,h\in G$ and $x,y\in C^*(\tilde\pi(B))$ we see that
$$x\pi(\lambda_g)y\pi(\lambda_h)=x(\pi(\lambda_g)y\pi(\lambda_g)^*)\pi(\lambda_{gh}).$$ Hence if either $x$ or $y$ belongs to $K$, then $x\pi(\lambda_g)y\pi(\lambda_h)\in L$, so $L$ is a $G$-invariant ideal of $D$.

Let $\phi\colon D\to D/L$ be the quotient map and let $(e_i)_{i\in I}$ be an approximate unit for $K$. Then $(e_i)_{i\in I}$ is an approximate unit for $L$ as well, and any $d\in D$ belongs to $L$ if and only if $e_id\to d$. Therefore $L\cap C^*(\tilde\pi(B))=K$. Now $\Phi=\phi\circ\tilde\pi\colon B\rtimes_r G\to D/L$ is multiplicative on $C^*_r(G)$, and since $\tilde\pi(x)^*\tilde\pi(x)-\tilde\pi(x^*x)\in\ker(\varphi|_{C^*(\tilde\pi(B))})\s L$ for all $x\in B$, it follows that $\Phi$ is a $^*$-homomorphism. Note further that $I\s\ker\Phi$, so that $J\s\ker\Phi$ as well, and that $\Phi$ is $G$-equivariant. Finally, if $\Phi(x)=0$ for $x\in B$ then $\tilde\pi(x)\in L\cap C^*(\tilde\pi(B))=K$. Thus $x=\varphi(\tilde\pi(x))=0$ and $\ker\Phi\cap B=\{0\}$. This completes the proof.
\end{proof}

The following theorem generalizes part of a result by Kawabe \cite[Theorem~3.4]{kawabe}.

\begin{theorem}\label{essip}Let $A$ be a $G$-$C^*$-algebra and let $B$ be a $G$-essential $C^*$-algebra extension of $A$. Then the action of $G$ on $A$ has the intersection property if and only if the action of $G$ on $B$ has the intersection property.\end{theorem}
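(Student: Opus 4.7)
My plan is to establish each direction of the biconditional separately, using Lemma \ref{ip1} as the main engine.

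\emph{The direction ``$B$ has i.p.\ implies $A$ has i.p.''} follows nearly at once from Lemma \ref{ip1}. Given a non-zero ideal $I\s A\rtimes_r G$, let $J$ denote the ideal in $B\rtimes_r G$ generated by $I$; since $I\s J$, the ideal $J$ is non-zero. The intersection property for $B$ then yields $J\cap B\neq\{0\}$, and Lemma \ref{ip1} gives $I\cap A\neq\{0\}$.

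\emph{For the direction ``$A$ has i.p.\ implies $B$ has i.p.''}, fix a non-zero ideal $K\s B\rtimes_r G$ and set $I:=K\cap (A\rtimes_r G)$, with $J$ denoting the ideal of $B\rtimes_r G$ generated by $I$, so that $J\s K$. If $I\neq\{0\}$, then the intersection property of $A$ gives $I\cap A\neq\{0\}$, and Lemma \ref{ip1} produces $J\cap B\neq\{0\}$, whence $K\cap B\neq\{0\}$. The critical remaining case is $I=\{0\}$ (equivalently, $K\cap (A\rtimes_r G)=\{0\}$) with $K\neq\{0\}$; here the quotient map $\pi\colon B\rtimes_r G\to (B\rtimes_r G)/K$ is faithful on $A\rtimes_r G$, so $\pi|_A$ is a $G$-equivariant complete isometry (with $G$ acting on the codomain via conjugation by $\pi(\lambda_g)$), and by $G$-essentiality of $B$ over $A$ the restriction $\pi|_B$ is also completely isometric, forcing $K\cap B=\{0\}$. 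The aim in this case is to derive the contradiction $K=\{0\}$.

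\emph{Main obstacle.} Eliminating the case $I=\{0\}$ is where the real difficulty lies. Following the template of Lemma \ref{ip1}'s proof, I would choose a $G$-equivariant unital $^*$-homomorphism $\pi\colon B\rtimes_r G\to\mM=\ell^\infty(G,B(H))$ with $\ker\pi=K$, invoke $G$-injectivity of $I_G(A)$ to obtain a $G$-equivariant u.c.p.\ map $\varphi\colon\mM\to I_G(A)$ satisfying $\varphi\circ\pi|_A=\iota_A$, and extend $\pi$ to $\psi\colon I_G(A)\rtimes_r G\to\mM$ via $G$-injectivity of $\mM$ (using $B\s I_G(A)$ supplied by Lemma \ref{essencrit} and Remark \ref{starhominj}); the $G$-rigidity of the inclusion $A\hookrightarrow I_G(A)$ then gives $\varphi\circ\pi|_B=\iota_B$, placing $\pi(B)$ in the multiplicative domain of $\varphi$. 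To actually conclude $K=\{0\}$, I expect one must promote this to a faithful conditional expectation on the quotient $(B\rtimes_r G)/K$ onto $\pi(B)\cong B$ compatible with the canonical conditional expectation $E\colon B\rtimes_r G\to B$, so that $(B\rtimes_r G)/K$ is forced to satisfy the characterising property of the reduced crossed product and $\pi$ becomes an isomorphism; organising this compatibility argument in the spirit of the $\Phi\colon B\rtimes_r G\to D/L$ construction in Lemma \ref{ip1}'s proof is the primary technical hurdle I foresee.
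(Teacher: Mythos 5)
Your ``$B\Rightarrow A$'' direction is correct and is essentially the paper's argument (the contrapositive of it, via Lemma~\ref{ip1}). The problem is the other direction, and you have correctly located it yourself: everything you actually prove in the case $K\cap(A\rtimes_r G)=\{0\}$ is that $K\cap B=\{0\}$, which you already knew; the needed conclusion $K=\{0\}$ is left as a ``plan,'' and the plan as sketched does not close. Re-running the machinery of Lemma~\ref{ip1} (multiplicative domains, the $\Phi\colon B\rtimes_r G\to D/L$ construction) only produces a $^*$-homomorphism killing $K$ and faithful on $B$ --- i.e.\ $K\cap B=\{0\}$ again. To get $K=\{0\}$ by your route you would need $E(K)=\{0\}$ for the canonical conditional expectation $E\colon B\rtimes_r G\to B$, and nothing in your setup forces this: in Lemma~\ref{google} the analogous step ($\varphi(\lambda_g)=0$ for $g\neq 1$, hence $\varphi=\varphi\circ E$) is extracted from freeness of the action on the spectrum of the centre, and no such hypothesis is available here. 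So there is a genuine gap, not just an unpolished write-up.

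The paper closes this case by a different and essentially non-dynamical device: by \cite[Theorem~3.4]{hamana1985} one may embed $B\rtimes_r G$ into the (non-equivariant) injective envelope $I(A\rtimes_r G)$ extending the inclusion of $A\rtimes_r G$, and then \cite[Lemma~1.2]{hamana1982} says that a closed subspace $J$ of $I(A\rtimes_r G)$ with $xJ+Jx\s J$ for all $x\in A\rtimes_r G$ and $J\cap(A\rtimes_r G)=\{0\}$ must vanish. In other words, the inclusion $A\rtimes_r G\s B\rtimes_r G$ is itself an essential extension (of plain $C^*$-algebras), so $A\rtimes_r G$ detects the ideals of $B\rtimes_r G$; the intersection property of $A$ reduces $K\cap B=\{0\}$ to $K\cap(A\rtimes_r G)=\{0\}$, and essentiality then gives $K=\{0\}$. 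This embedding of $B\rtimes_r G$ into $I(A\rtimes_r G)$ is the ingredient missing from your argument; with it, your rigidity computation for $\varphi\circ\psi$ is no longer needed and the whole case collapses to a one-line application of Hamana's lemma.
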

\begin{proof}If $I\s A\rtimes_r G$ is an ideal such that $I\cap A=\{0\}$, then let $J\s B\rtimes_r G$ be the ideal generated by $I$. Since Lemma~\ref{ip1} yields that $J\cap B=\{0\}$, then if the action of $G$ on $B$ has the intersection property, it follows that $I\s J=\{0\}$. Conversely, if $J\s B\rtimes_r G$ is an ideal for which $J\cap B=\{0\}$, then $J\cap A=\{0\}$. Therefore if the action of $G$ on $A$ has the intersection property, we have $J\cap(A\rtimes_r G)=\{0\}$. By \cite[Theorem~3.4]{hamana1985} we may embed $B\rtimes_r G$ into $I(A\rtimes_r G)$ as a $G$-$C^*$-subalgebra. Since $xJ+Jx\s J$ for all $x\in A\rtimes_r G$, it follows from \cite[Lemma~1.2]{hamana1982} that $J=\{0\}$.\end{proof}

\section{Centres of injective envelopes}

To connect Theorem \ref{essip} to the discussion of the intersection property in Section 2, we now consider the centre of the $G$-injective envelope of a $G$-$C^*$-algebra.

Claim (i) of the following lemma has previously been noted in \cite[Proposition 6.4]{hamana1985}; we have not been able to locate a reference for claim (ii).

\begin{lemma}\label{goodlemma}Let $A$ be a $G$-$C^*$-algebra. Then:
\begin{enumerate}[\upshape(i)]
\item There is a $G$-equivariant unital injective $^*$-homomorphism $Z(A)\to Z(I_G(A))$.
\item If $A$ is $G$-injective, then so is $Z(A)$.
\end{enumerate}\end{lemma}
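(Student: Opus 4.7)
The approach is to realize the $G$-injective envelope inside $\mM=\ell^\infty(G,I(A))$ via Lemma~\ref{injyes}: write $\kappa\colon A\to\mM$ for the canonical inclusion and let $\varphi\colon\mM\to\mM$ be a $\kappa(A)$-projection with $\varphi(\mM)=I_G(A)$. By Remark~\ref{nonGinj} the $G$-action on $A$ extends to $I(A)$ and preserves $Z(I(A))$, while the $G=\{1\}$ case of (i) (Hamana's original result) gives $Z(A)\s Z(I(A))$. Consequently, for every $z\in Z(A)$ the element $\kappa(z)$ satisfies $\kappa(z)(g)=g^{-1}z\in Z(I(A))$ for all $g\in G$, so $\kappa(z)$ lies in the centre of $\mM$.

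For (i), since $\kappa(A)\s\mult(\varphi)$, for any $x\in I_G(A)=\varphi(\mM)$ the chain
\[\kappa(z)\circ x=\varphi(\kappa(z)x)=\kappa(z)x=x\kappa(z)=\varphi(x\kappa(z))=x\circ\kappa(z)\]
holds in $\mM$: the outer equalities are the definition of the Choi--Effros product, the second and fourth use the multiplicative-domain identities together with $\varphi(\kappa(z))=\kappa(z)$ and $\varphi(x)=x$, and the middle uses centrality of $\kappa(z)$ in $\mM$. Thus $\kappa$ restricts to a $G$-equivariant unital injective $^*$-homomorphism $Z(A)\to Z(I_G(A))$.

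For (ii), assume $A$ is $G$-injective. Then $A$ is injective, and the centre of an injective $C^*$-algebra is itself injective (it is a commutative AW$^*$-algebra, hence of the form $C(X)$ with $X$ extremally disconnected), so $\ell^\infty(G,Z(A))$ is $G$-injective. It then suffices to exhibit $Z(A)$ as a $G$-equivariant \mbox{u.c.p.} retract of $\ell^\infty(G,Z(A))$ along its canonical inclusion, since a $G$-equivariant \mbox{u.c.p.} retract of a $G$-injective system is itself $G$-injective. $G$-injectivity of $A$ produces a $G$-equivariant \mbox{u.c.p.} retraction $E\colon\ell^\infty(G,A)\to A$ of $\kappa_A\colon A\to\ell^\infty(G,A)$, and $E\circ\kappa_A=\idd_A$ places $\kappa_A(A)$ in $\mult(E)$. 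The key step is to check that $E(\ell^\infty(G,Z(A)))\s Z(A)$: for $f\in\ell^\infty(G,Z(A))$ and $a\in A$ one has $f\kappa_A(a)=\kappa_A(a)f$ pointwise in $\ell^\infty(G,A)$ (using $f(h)\in Z(A)$), so the multiplicative-domain identities give $E(f)a=E(f\kappa_A(a))=E(\kappa_A(a)f)=aE(f)$. Restricting $E$ then produces the required retraction.

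The main obstacle is conceptual: in (i) we need elements of $Z(A)$ to commute with \emph{all} of $I_G(A)$, not merely with $A$. This forces the ambient algebra to be $\ell^\infty(G,I(A))$ rather than $\ell^\infty(G,B(H))$ for some Hilbert space representation, since only the former places each pointwise value $g^{-1}z$ inside the centre of the ambient $C^*$-algebra. Once this is arranged, the multiplicative-domain identities of $\varphi$ (in (i)) and of $E$ (in (ii)) transport centrality from the large envelope to the intended target.
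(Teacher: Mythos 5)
Your argument is correct and follows essentially the same route as the paper: realize $I_G(A)$ inside $\ell^\infty(G,I(A))$, use $Z(A)\s Z(I(A))$ to place $\kappa(Z(A))$ in the centre of the ambient algebra, and transport centrality through the multiplicative domain of the relevant projection/retraction (the paper invokes Hamana's Lemma~2.4 where you use the multiplicative-domain identities, but these are interchangeable here). Part (ii) in particular coincides with the paper's proof step for step.
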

\begin{proof}Let $\mM=\ell^\infty(G,I(A))$, let $\kappa\colon A\to \mM$ be a $G$-equivariant inclusion and let $\varphi\colon\mM\to\mM$ be a $\kappa(A)$-projection so that $I_G(A)$ can be taken to be the image $\varphi(\mM)$ with the Choi-Effros product by Lemma~\ref{injyes}. For $x\in Z(\mM)$ and $y\in\varphi(\mM)$ \cite[Lemma~2.4]{hamana1979} yields $$\varphi(x)\circ y=\varphi(\varphi(x)y)=\varphi(xy)=\varphi(yx)=\varphi(y\varphi(x))=y\circ\varphi(x),$$ and thus $\varphi(Z(\mM))\s Z(I_G(A))$. To see that (i) holds, note that $Z(A)\s A'\cap I(A)=Z(I(A))$ by \cite[Corollary~4.3]{hamana1979}, so that $\kappa(Z(A))\s Z(\mM)$. Since $\kappa\colon A\to I_G(A)$ is a $^*$-homomorphism and $\varphi\circ\kappa=\kappa$, (i) follows.

For (ii), note first that $A$ is an $AW^*$-algebra, so $Z$ is a commutative $AW^*$-subalgebra of $A$ and is therefore injective \cite[Remark 2.5]{hadwinpaulsen}. Hence as in \cite[Remark 2.3]{hamana1985} we only need to construct a $G$-equivariant \mbox{u.c.p.} map $\varphi\colon\ell^\infty(G,Z)\to Z$ satisfying $\varphi\circ\kappa=\idd_Z$ where $\kappa$ is the canonical inclusion map -- it will then follow from $G$-injectivity of $\ell^\infty(G,Z)$ that $Z$ is $G$-injective.

Consider instead the canonical inclusion map $\kappa\colon A\to\mM=\ell^\infty(G,A)$ for $A$ and let $\varphi\colon\mM\to A$ be a $G$-equivariant \mbox{u.c.p.} map such that $\varphi\circ\kappa=\idd_A$, the existence of which follows from $G$-injectivity of $A$. 
Since $\kappa(A)\s\mult(\varphi)$, then for $z\in\ell^\infty(G,Z)=Z(\mM)$ we have $$\varphi(z)x=\varphi(z)\varphi(\kappa(x))=\varphi(z\kappa(x))=\varphi(\kappa(x)z)=\varphi(\kappa(x))\varphi(z)=x\varphi(z)$$ whenever $x\in A$, so $\varphi(z)\in Z$. Thus $\varphi$ maps $\ell^\infty(G,Z)$ into $Z$ and $\varphi\circ\kappa|_Z=\idd_Z$, so $Z$ is indeed $G$-injective.
\end{proof}

\begin{rem}\label{comm}Let $A$ be a unital commutative $G$-$C^*$-algebra. Then the $G$-injective envelope $I_G(A)$ is also commutative. We present a slightly different argument than the one given in \cite[Theorem~4.2]{hadwinpaulsen}. First note that the injective envelope $I(A)$ is commutative: indeed, we have $A\s A'\cap I(A)=Z(I(A))$ so for any unitary $u\in I(A)$ the automorphism $x\mapsto uxu^*$ of $I(A)$ is the identity map on $A$. Rigidity of $I(A)$ then implies $uxu^*=x$ for all $x\in I(A)$. 
If $\varphi\colon\ell^\infty(G,I(A))\to\ell^\infty(G,I(A))$ is a minimal $\kappa(A)$-projection, where $\kappa$ is the canonical inclusion map from Lemma~\ref{injyes}, then $I_G(A)$ is the image of $\varphi$ and the Choi-Effros product on $I_G(A)$ is evidently commutative.\end{rem}

\begin{exam}\label{nonfun}Any $G$-operator system $S$ admits a $G$-equivariant \mbox{u.c.p.} map $S\to I_G(\CC)$. Moreover, due to $G$-essentiality there always exists a completely isometric $G$-equivariant u.c.p. map $I_G(\CC)\to S$ whenever $S$ is $G$-injective. However, if $A$ is a unital $G$-injective $G$-$C^*$-algebra, there need not exist a $G$-equivariant unital injective $^*$-homomorphism $I_G(\CC)\to A$.

Indeed, let $B$ be the unital commutative $G$-$C^*$-algebra from Remark \ref{nonint} and let $A=I_G(B)$. Then $A$ is commutative, as verified in Remark \ref{comm}, and the action of $G$ on $A$ does not have the intersection property due to Theorem~\ref{essip}. By Lemma~\ref{google}, the action of $G$ on the maximal ideal space of $A$ is not free. If $G$ is non-trivial and $C^*$-simple, the action of $G$ on the maximal ideal space $\partial_FG$ of $I_G(\CC)$ is free by Theorem~\ref{kk}, so that there cannot exist a $G$-equivariant unital $^*$-homomorphism $I_G(\CC)\to A$.

In fact, we may apply Proposition \ref{injGX} to obtain that $A=\ell^\infty(G/H)$ for a non-trivial amenable subgroup $H$ of $G$. Any $G$-equivariant unital $^*$-homomorphism $I_G(\CC)\to\ell^\infty(G/H)$ gives rise to a $G$-equivariant continuous surjection $\beta(G/H)\to\partial_FG$. Since any element of $H$ fixes all points in $\beta(G/H)$, it follows that $H$ fixes all of $\partial_FG$ as well, contradicting that the action of $G$ on $\partial_FG$ is free.
\end{exam}

In light of Example \ref{nonfun}, we will now find a commutative $G$-injective envelope that the centre of a given $G$-injective envelope always does contain.

Recall that if $D$ is an $AW^*$-algebra, then any element $x\in D$ has a central support \cite[1.1.6]{berberian}, i.e., there exists a smallest central projection $C_x\in D$ such that $C_xx=x$. Further, if $z\in Z(D)$, then $zx=0$ if and only if $zC_x=0$.

\begin{lemma}\label{preparation-1}Let $A$ be an injective $G$-$C^*$-algebra with centre $Z$, and let $\kappa\colon A\to\ell^\infty(G,A)$ be the canonical inclusion map. If $\chi\colon\ell^\infty(G,Z)\to\ell^\infty(G,Z)$ is a $\kappa(Z)$-projection and $x\in A$, then $\kappa(x)\chi(f)=0$ whenever $\kappa(x)f=0$, for all $f\in\ell^\infty(G,Z)$.\end{lemma}
\begin{proof}Since $A$ is injective, it is an $AW^*$-algebra. By \cite[Proposition~1.1.10.1]{berberian} $\ell^\infty(G,A)$ is an $AW^*$-algebra as well. Let $\mZ=\ell^\infty(G,Z)$ and note that $\mZ$ is the centre of $\ell^\infty(G,A)$. Fix $x\in A$ and let $C_x\in Z$ be the central support of $x$. Then $\kappa(C_x)\in\mZ$ is the central support of $\kappa(x)$ in $\ell^\infty(G,A)$. Indeed, if $f\in\mZ$ is a projection such that $f\kappa(x)=\kappa(x)$, then $f(g)$ is a central projection in $A$ and $f(g)g^{-1}x=g^{-1}x$ for all $g\in G$. Thus $g^{-1}C_x\leq f(g)$ for all $g\in G$, so that $\kappa(C_x)\leq f$. Now, suppose that $\kappa(x)f=0$ for some $f\in\mZ$. Then $\kappa(C_x)f=0$. Since $\chi\circ\kappa|_Z=\kappa|_Z$, we have $\kappa(Z)\s\mult(\chi)$ and $$\kappa(C_x)\chi(f)=\chi(\kappa(C_x))\chi(f)=\chi(\kappa(C_x)f)=0.$$
Hence $\kappa(x)\chi(f)=0$.\end{proof}

\begin{lemma}\label{preparation-2}Let $A$ be an injective $G$-$C^*$-algebra with centre $Z$, and let $\mM=\ell^\infty(G,A)$ and $\mZ=\ell^\infty(G,Z)$.

Furthermore, let $\kappa\colon A\to\mM$ be the canonical inclusion map and $\Psi\colon A\otimes\mZ\to\mM$ be the $G$-equivariant $^*$-homomorphism given by $$\Psi(x\otimes f)=\kappa(x)f,\quad x\in A,\ f\in\mZ.$$
For any $\kappa(Z)$-projection $\chi\colon\mZ\to\mZ$ and any positive integer $n\geq 1$, let $\tilde{\Psi}=\Psi\circ(\idd_A\otimes\chi)$. Then the subset $$I_n=\{y\in A\otimes\mZ\otimes M_n(\CC)\,|\,(\tilde{\Psi}\otimes\idd_n)(y^*y)=(\tilde{\Psi}\otimes\idd_n)(yy^*)=0\}$$ is a closed ideal of $A\otimes\mZ\otimes M_n(\CC)$ containing $\ker(\Psi\otimes\idd_n)$.\end{lemma}
\begin{proof}To see that $I_n$ is an ideal, note for instance that for $y\in I_n$, $x\in A$, $f\in\mZ$ and $b\in M_n(\CC)$, then as $1\otimes f\otimes 1$ is central in $A\otimes\mZ\otimes M_n(\CC)$, we find that
\begin{align*}&(\varphi\otimes\idd_n)((x\otimes f\otimes b)yy^*(x\otimes f\otimes b)^*)\\&=(\varphi(a\otimes 1)\otimes b)(\tilde\Psi\otimes\idd_n)(y(1\otimes f^*f\otimes 1)y^*)(\varphi(a\otimes 1)\otimes b)^*\\&\leq\|f\|^2(\varphi(a\otimes 1)\otimes b)(\tilde\Psi\otimes\idd_n)(yy^*)(\kappa(x)\otimes b)^*=0,\end{align*} and $$(\tilde\Psi\otimes\idd_n)(y^*(x\otimes f\otimes b)^*(x\otimes f\otimes b)y)\leq\|x\|^2\|f\|^2\|b\|^2(\tilde\Psi\otimes\idd_n)(y^*y)=0,$$ which implies $(x\otimes f\otimes b)y\in I_n$.

If $x\in A$ and $z\in\mZ\otimes M_n(\CC)$ satisfy $(\Psi\otimes\idd_n)(x\otimes z)=0$, write $z^*z=\sum_{i,j}z_{ij}\otimes e_{ij}$, $z_{ij}\in\mZ$, with respect to the canonical basis $(e_{ij})$ of matrix units in $M_n(\CC)$. Then $\sum_{i,j}\kappa(x^*x)z_{ij}\otimes e_{ij}=0$, so that $\kappa(x^*x)z_{ij}=0$ for all $i,j$. By Lemma~\ref{preparation-1}, $\kappa(x^*x)\chi(z_{ij})=0$ for all $i,j$ as well, so that 
\begin{align*}&(\tilde{\Psi}\otimes\idd_n)((x\otimes z)^*(x\otimes z)))\\&=(\Psi\otimes\idd_n)((\idd_A\otimes\chi\otimes\idd_n)((x\otimes z)^*(x\otimes z)))\\
&=\sum_{i,j}\kappa(x^*x)\chi(z_{ij})\otimes e_{ij}=0.\end{align*}Therefore $x\otimes z\in I_n$ whenever $x\otimes z\in\ker(\Psi\otimes\idd_n)$ for $x\in A$ and $z\in\mZ\otimes M_n(\CC)$. Since $\mZ\otimes M_n(\CC)$ is exact, any closed ideal in $A\otimes(\mZ\otimes M_n(\CC))$ is generated by the elementary tensors it contains \cite[Propositions~2.16--2.17]{blanchardkirchberg}, so $\ker(\Psi\otimes\idd_n)\s I_n$.\end{proof}
\begin{theorem}\label{main2}Let $A$ be a unital $G$-$C^*$-algebra. Then there exists a $G$-essential $C^*$-algebra extension $B$ of $A$ such that $I_G(Z(I(A)))$ embeds into $Z(B)$ as a unital $G$-invariant $C^*$-subalgebra. In particular, $I_G(Z(I(A)))$ embeds into $Z(I_G(A))$ as a unital $G$-invariant $C^*$-subalgebra.\end{theorem}
\begin{proof}Since any $G$-essential extension of $I(A)$ is also a $G$-essential extension of $A$ by Lemma~\ref{essencrit} and Remark \ref{nonGinj}, we may assume that $A$ is injective. Let $$Z=Z(A),\ \mM=\ell^\infty(G,A),\  \mZ=\ell^\infty(G,Z)=Z(\mM),$$ and let $\kappa\colon A\to\mM$ be the canonical inclusion map. Then $\kappa(Z)\s\mZ$. Let $\Psi\colon A\otimes\mZ\to\mM$ be the $^*$-homomorphism of Lemma~\ref{preparation-2} and let $A_\mZ$ be the image of $\Psi$. Then $A_\mZ$ is a unital $G$-invariant $C^*$-subalgebra of $\mM$, and we obtain the following commutative diagram:
\[\begin{tikzcd}
Z\arrow[rr,"\kappa|_Z"] \arrow[dd,hook]&&\mZ\ar[dd,hook]\arrow[dl,"\hspace{-3pt}1\otimes\idd_\mZ"]\\
&A\otimes\mZ\arrow[dr,"\Psi"]&\\
A\arrow[ur,"\idd_A\otimes 1\hspace{-5pt}"]\arrow[rr,"\kappa"]&&A_\mZ\end{tikzcd}\]

Let $\Phi\colon\mM\to\mM$ be a $\kappa(A)$-projection so that $\Phi(\mM)=I_G(A)$. Since $\kappa(A)\s\mult(\Phi)$ it is easy to see that $\Phi(\mZ)\s\kappa(A)'=\mZ$, so by Lemma~\ref{minimalsmall} and Lemma~\ref{goodlemma} we can let $\chi\colon\mZ\to\mZ$ be a $\kappa(Z)$-projection such that $\chi(\mZ)=I_G(Z)$ and $\chi\prec\Phi|_\mZ$. 
Let $B\s\mM$ be the image of the $G$-equivariant \mbox{u.c.p.} map $$\tilde\Psi=\Psi\circ(\idd_A\otimes\chi)\colon A\otimes\mZ\to\mM.$$ Note that $(\idd_A\otimes 1)(A)\s\mult(\tilde\Psi)$, since $\tilde\Psi\circ(\idd_A\otimes 1)=\kappa$. Further, $\kappa$ maps $A$ into $B$ and 
$$\Phi(\tilde\Psi(x\otimes f))=\Phi(\kappa(x)\chi(f))=\kappa(x)\chi(f)=\tilde\Psi(x\otimes f),\quad x\in A,\ f\in\mZ.$$ This proves that $\Phi|_B=\idd_B$, so $B\s\Phi(\mM)=I_G(A)$ and therefore $(B,\kappa)$ is a $G$-essential extension of $A$ by Lemma~\ref{essencrit}.

We now claim that $B$ has the structure of a unital $G$-$C^*$-algebra. First endow the $G$-injective envelope $I_G(Z)$ with the Choi-Effros product of $\chi$, i.e., define $x\circ y=\chi(xy)$ for $x,y\in I_G(Z)$, so that $I_G(Z)$ is a unital $G$-$C^*$-algebra with the involution and norm of $\mZ$ and the product $\circ$.

If $y\in\mA\otimes\mZ$ and $\Psi(y)=0$, then $\tilde\Psi(y)^*\tilde\Psi(y)\leq\tilde\Psi(y^*y)=0$ by Lemma~\ref{preparation-2}. This allows us to define a unital $G$-equivariant map $\eps\colon A_\mZ\to A_\mZ$ with image $B$ by $$\eps(\Psi(y))=\tilde\Psi(y)=\Psi((\idd_A\otimes\chi)(y)),\quad y\in\mA\otimes\mZ,$$ so that the following diagram commutes:
\[\begin{tikzcd}
A\otimes\mZ\ar[r,"\Psi"]\ar[d,swap,"\idd_A\otimes\chi"]\ar[dr,"\tilde\Psi"]&A_\mZ\ar[r,hook]\ar[d,dashed,"\eps"]&\mM\ar[d,"\Phi"]\\A\otimes\mZ\ar[r,"\Psi"]&B\ar[r,hook]&I_G(A)\end{tikzcd}\]
Moreover, for $n\geq 1$ and any two $x,y\in A\otimes\mZ\otimes M_n(\CC)$ such that $(\Psi\otimes\idd_n)(x)=(\Psi\otimes\idd_n)(y)$ the lemma yields $(\tilde{\Psi}\otimes\idd_n)(x)=(\tilde\Psi\otimes\idd_n)(y)$. Since any positive element in $A_\mZ\otimes M_n(\CC)$ lifts via the $^*$-homomorphism $\Psi\otimes\idd_n$ to a positive element in $A\otimes\mZ\otimes M_n(\CC)$ for all $n$, $\eps$ is completely positive.
Finally, if we let $\tilde\chi=\idd_A\otimes\chi$, then $$\eps(\eps(\Psi(y)))=\eps(\Psi(\tilde\chi(y)))=\Psi(\tilde\chi(\tilde\chi(y)))=\Psi(\tilde\chi(y))=\eps(\Psi(y)),$$ so that $\eps$ is idempotent. As in the proof of \cite[Theorem~3.1]{choieffros} (see also \cite[Theorem~2.3]{hamana1979}), the image $B$ of $\eps$ is a unital $C^*$-algebra when endowed with the Choi-Effros product, i.e.,
$$x\ast y=\eps(xy),\quad x,y\in B.$$ This proves the claim. Furthermore, $B$ is completely order isomorphic to this unital $C^*$-algebra. Since $\kappa\colon A\to B$ is a $^*$-homomorphism with respect to the product on $B$, $B$ is a genuine $G$-essential $C^*$-algebra extension of $A$.

Let $\delta\colon I_G(Z)\to B$ be the map given by $\delta(x)=\Psi(1\otimes x)$. We claim that $\delta$ is in fact a unital $G$-equivariant injective $^*$-homomorphism of $(I_G(Z),\circ)$ into the centre of $(B,\ast)$. In fact, $\delta$ is the inclusion map $I_G(Z)=\chi(\mZ)\s\tilde\Psi(A_\mZ)=B$; we elect to use the above expression, as it makes calculations a bit neater. First, $\delta\circ\kappa|_Z=\kappa|_Z$ so that $\delta$ is a complete isometry, since $(I_G(Z),\kappa|_Z)$ is a $G$-essential extension of $Z$.
 Next, for $x,y\in\mZ$ we observe that
\begin{align*}\delta(\chi(x)\circ\chi(y))&=\Psi(1\otimes\chi(\chi(x)\chi(y)))\\
&=(\Psi\circ\tilde\chi)(1\otimes\chi(x)\chi(y))\\
&=\eps(\Psi(1\otimes\chi(x)\chi(y)))\\
&=\eps(\Psi(1\otimes\chi(x))\Psi(1\otimes\chi(y)))\\
&=\Psi(1\otimes\chi(x))\ast\Psi(1\otimes\chi(y))\\
&=\delta(\chi(x))\ast\delta(\chi(y)).\end{align*}
Therefore $\delta$ is a $^*$-homomorphism of $(I_G(Z),\circ)$ into $(B,\ast)$. Finally, for $x\in\mZ$ and $y\in A\otimes\mZ$ we have $1\otimes\chi(x)\in Z(A\otimes\mZ)$, and therefore \begin{align*}\delta(\chi(x))\ast\tilde\Psi(y)&=\eps(\Psi(1\otimes\chi(x))\tilde\Psi(y))\\&=\eps(\Psi((1\otimes\chi(x))\tilde\chi(y)))\\&=\eps(\Psi(\tilde\chi(y)(1\otimes\chi(x))))\\&=\eps(\Psi(\tilde\chi(y))\Psi(1\otimes\chi(x)))\\&=\tilde\Psi(y)\ast\delta(\chi(x)).\end{align*}
Thus $\delta$ is an injective $G$-equivariant $^*$-homomorphism into the centre of $B$. 

Finally, since $I_G(B)=I_G(A)$ by Lemma~\ref{essencrit} we see that $Z(B)$ embeds into $Z(I_G(B))=Z(I_G(A))$ as a unital $G$-invariant $C^*$-subalgebra due to Lemma~\ref{goodlemma}. This proves the second assertion.\end{proof}

\begin{rem}By Lemma \ref{goodlemma}, there always exists a $G$-equivariant injective $^*$-homomorphism $\iota\colon Z(I(A))\to Z(I_G(A))$ for any $G$-$C^*$-algebra $A$, since $I_G(I(A))=I_G(A)$. Scrutinizing the proofs of Lemma~\ref{goodlemma} and Theorem~\ref{main2}, we obtain a commutative diagram
\[\begin{tikzcd}
Z(I(A))\arrow[r,hook,"\iota"]\arrow[d,"\kappa"]&Z(I_G(A))\\
I_G(Z(I(A)))\arrow[ur,dashed,"\delta"]\end{tikzcd}\]
where $\delta\colon I_G(Z(I(A)))\to Z(I_G(A))$ is the $G$-equivariant injective $^*$-ho\-mo\-morphism obtained in the theorem, and $\kappa\colon Z(I(A))\to I_G(Z(I(A)))$ is the natural inclusion map.\end{rem}

\begin{rem}
If $A$ is a $G$-$C^*$-algebra and $Z=Z(I(A))$, the $G$-$C^*$-algebra $B$ constructed in the proof of Theorem~\ref{main2} is a $G$-equivariant quotient of $I(A)\otimes I_G(Z)$ such that the restriction of the quotient map $I(A)\otimes I_G(Z)\to B$ to either $I(A)$ or $I_G(Z)$ is an injective $^*$-homomorphism.

Indeed, let $\mZ=\ell^\infty(G,Z)$ and let $\chi\colon\mZ\to\mZ$ be the $\kappa(Z)$-projection from the proof of Theorem~\ref{main2}. Equipping $I_G(Z)$ with the Choi-Effros product arising from $\chi$, then the restriction $\eps_0$ to $I(A)\otimes I_G(Z)$ of the $G$-equivariant \mbox{u.c.p.} map $\eps\circ\Psi\colon I(A)\otimes\mZ\to B$ from the aforementioned proof is a surjective $^*$-homomorphism with respect to the new $C^*$-algebra structure of $I(A)\otimes I_G(Z)$. Indeed, \begin{align*}\eps_0(a\otimes\chi(f))\ast\eps_0(b\otimes\chi(g))&=\eps(\eps(\Psi(a\otimes\chi(f)))\eps(\Psi(b\otimes\chi(g))))\\&=\eps(\Psi(ab\otimes\chi(f)\chi(g)))\\&=\eps(\eps(\Psi(ab\otimes\chi(f)\chi(g))))\\&=\eps(\Psi(ab\otimes\chi(\chi(f)\chi(g)))\\&=\eps_0(ab\otimes(\chi(f)\circ\chi(g)))\\&=\eps_0((a\otimes\chi(f))(b\otimes\chi(g)))\end{align*} for all $a,b\in I(A)$ and $f,g\in\mZ$. Moreover, we may identify $I(A)\otimes I_G(Z)$ with the $G$-operator subsystem $D=(\idd_{I(A)}\otimes\chi)(I(A)\otimes\mZ)\s I(A)\otimes\mZ$ by means of a complete order isomorphism. Since $\Psi$ maps $D$ onto $B$ by definition, it follows that $\eps$ is surjective. Finally, $\eps_0(a\otimes 1)=\kappa(a)$ and $\eps_0(1\otimes\chi(f))=\chi(f)$ for all $a\in I(A)$ and $f\in\mZ$, proving the claim.

If $A$ is a prime $C^*$-algebra, then $I(A)$ has trivial centre \cite[Corollary~8.1.28]{saitowright}, and the $^*$-homomorphism $\Psi\colon I(A)\otimes\mZ\to\ell^\infty(G,I(A))$ is injective. It is easily verified that $\eps_0$ is then in fact a $^*$-isomorphism. Thus the $G$-$C^*$-algebra $I(A)\otimes I_G(\CC)$ is a $G$-essential extension of $A$ with respect to the inclusion map $a\mapsto a\otimes 1$, and by Lemma~\ref{essencrit} any $G$-invariant sub-$C^*$-algebra of $I(A)\otimes I_G(\CC)$ containing $A\otimes 1$ is a $G$-essential extension of $A$ as well, including $A\otimes I_G(\CC)$.\end{rem}

By Theorem~\ref{main2}, we obtain the following criterion for a group action on a $C^*$-algebra to have the intersection property.

\begin{theorem}\label{supercentre}Let $A$ be a unital $G$-$C^*$-algebra and let $I(A)$ denote the injective envelope of $A$. Whenever the action of $G$ on $Z(I(A))$ has the intersection property, then so does the action of $G$ on $A$ and $I(A)$.\end{theorem}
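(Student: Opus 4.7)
The plan is to combine Theorem~\ref{bollocks} with Kawabe's commutative characterisation of the intersection property (stated in the introduction), Lemma~\ref{google}, and Theorem~\ref{essip}. The upshot is that the intersection property for the abelian algebra $Z(I(A))$ forces the action on the (potentially much larger) centre $Z(I_G(A))$ to produce a free action on the relevant spectrum, and this suffices to yield the intersection property for $I_G(A)$, hence for $A$ and $I(A)$.

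First, I assume the action of $G$ on $Z(I(A))$ has the intersection property. Since $Z(I(A))$ is a unital commutative $G$-$C^*$-algebra, Kawabe's theorem (see the discussion around Theorem~\ref{kk} in the introduction) tells us that $G$ acts freely on the maximal ideal space of the commutative $G$-$C^*$-algebra $I_G(Z(I(A)))$. By Theorem~\ref{bollocks}, $I_G(Z(I(A)))$ embeds as a unital $G$-invariant $C^*$-subalgebra of $Z(I_G(A))$, and the latter is also commutative. Gelfand duality then gives a continuous $G$-equivariant surjection from the maximal ideal space of $Z(I_G(A))$ onto that of $I_G(Z(I(A)))$. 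Freeness pulls back along any equivariant surjection: if $g$ fixes a character of $Z(I_G(A))$, then $g$ fixes its image in the maximal ideal space of $I_G(Z(I(A)))$, forcing $g=1$. Hence $G$ acts freely on the maximal ideal space of $Z(I_G(A))$.

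Next, I apply Lemma~\ref{google} to the $G$-$C^*$-algebra $I_G(A)$: for every ideal $I\s I_G(A)\rtimes_r G$, we have $I\s(I\cap I_G(A))\bar\rtimes_r G$. If $I\cap I_G(A)=\{0\}$, the right-hand side is trivial, so $I=\{0\}$, proving that the action of $G$ on $I_G(A)$ has the intersection property.

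Finally, both $A$ and $I(A)$ embed into $I_G(A)$ as $G$-essential $C^*$-algebra extensions: the inclusion $A\hookrightarrow I_G(A)$ is $G$-essential by construction of the $G$-injective envelope, and by Lemma~\ref{essencrit} combined with Remark~\ref{nonGinj} the inclusion $I(A)\hookrightarrow I_G(A)$ (realised via Remark~\ref{starhominj}) is also $G$-essential, with $I_G(I(A))=I_G(A)$. Theorem~\ref{essip} then transports the intersection property from $I_G(A)$ down to both $A$ and $I(A)$, completing the proof. The only conceptual step that requires care is the passage of freeness through the spectral surjection in the second paragraph, and verifying that $I(A)$ really does sit inside $I_G(A)$ as a $G$-essential $C^*$-subalgebra; both are immediate consequences of the preparatory material in Section~2.
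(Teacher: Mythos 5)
Your proposal is correct and follows essentially the same route as the paper's own proof: Kawabe's theorem gives freeness on the spectrum of $I_G(Z(I(A)))$, Theorem~\ref{bollocks} transfers this to the spectrum of $Z(I_G(A))$, Lemma~\ref{google} yields the intersection property for $I_G(A)$, and Theorem~\ref{essip} passes it down to $A$ and $I(A)$. The extra details you supply (the pullback of freeness along the equivariant spectral surjection, and the verification that $I(A)\hookrightarrow I_G(A)$ is a $G$-essential $C^*$-algebra extension via Lemma~\ref{essencrit} and Remark~\ref{nonGinj}) are exactly the steps the paper leaves implicit.
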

\begin{proof}Due to \cite[Theorem~3.4]{kawabe} the action of $G$ on the maximal ideal space of the $G$-injective envelope $I_G(Z(I(A)))$ is free, so by Theorem~\ref{main2} the action of $G$ on the maximal ideal space of $Z(I_G(A))$ is also free. Therefore the action of $G$ on $I_G(A)$ has the intersection property by Lemma~\ref{google}. The conclusion then follows from Theorem~\ref{essip}.\end{proof}

We next present some applications for the case when the group $G$ is $C^*$-simple.

\begin{cor}\label{minimalcentre}Let $G$ be a $C^*$-simple discrete group and let $A$ be a unital $G$-$C^*$-algebra. If $Z(I(A))$ is $G$-simple, then the action of $G$ on $A$ has the intersection property.\end{cor}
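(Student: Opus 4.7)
The plan is to apply Theorem~\ref{supercentre}, which reduces the problem to verifying that the action of $G$ on $C := Z(I(A))$ has the intersection property. By \cite[Theorem~3.4]{kawabe}, this is equivalent to showing that the action of $G$ on the maximal ideal space $Y$ of $I_G(C)$ is free. Write $X$ for the maximal ideal space of $C$, which is a minimal compact $G$-space by the $G$-simplicity hypothesis, and let $\pi : Y \to X$ be the continuous $G$-equivariant surjection dual to the inclusion $C \hookrightarrow I_G(C)$.

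First I would verify that $Y$ is itself minimal. Suppose $F \subseteq Y$ is a non-empty closed $G$-invariant subset. Then $\pi(F)$ is a non-empty closed $G$-invariant subset of $X$, and minimality of $X$ forces $\pi(F) = X$. The restriction $^*$-homomorphism $q : I_G(C) \to C(F)$ thus has $q|_C$ completely isometric, and $G$-essentiality of the extension $C \hookrightarrow I_G(C)$ then forces $q$ to be a complete isometry, so $F = Y$.

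Next I would produce a continuous $G$-equivariant surjection from $Y$ onto the Furstenberg boundary $\partial_F G$, i.e., the maximal ideal space of $I_G(\CC)$. By $G$-injectivity of $I_G(C)$, extend the inclusion $\CC \hookrightarrow I_G(C)$ to a $G$-equivariant \mbox{u.c.p.} map $\phi : I_G(\CC) \to I_G(C)$. The dual $\hat\phi : Y \to \mP(\partial_F G)$ is a continuous $G$-equivariant map into the space of regular Borel probability measures, and $\hat\phi(Y)$ is a minimal closed $G$-invariant subset of $\mP(\partial_F G)$ since $Y$ is minimal. The key point is that $\partial_F G$ is strongly proximal as the maximal $G$-boundary, so the only minimal closed $G$-invariant subset of $\mP(\partial_F G)$ is the embedded copy of $\partial_F G$ as Dirac masses. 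Hence $\hat\phi(Y)$ lies inside these Dirac masses, yielding the required continuous $G$-equivariant surjection $Y \to \partial_F G$.

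By Theorem~\ref{kk} and the $C^*$-simplicity of $G$, the action of $G$ on $\partial_F G$ is free, and freeness pulls back through the surjection to freeness on $Y$: if $gy = y$ for $g \in G$ and $y \in Y$, then $g$ fixes the image of $y$ in $\partial_F G$, so $g = 1$. The main technical obstacle is the strong proximality statement for $\partial_F G$ (and the ensuing identification of the minimal closed $G$-invariant subsets of $\mP(\partial_F G)$); this is standard from the characterisation of $\partial_F G$ as the universal $G$-boundary, but it is the one non-formal ingredient that is not immediately visible in the preceding sections of the paper.
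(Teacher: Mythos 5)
Your proof is correct, but it takes a genuinely different route from the paper. The paper disposes of the corollary in two lines: it invokes \cite[Theorem~7.1]{kalantarkennedyozbr} to conclude that $Z(I(A))\rtimes_r G$ is simple (hence that the action of $G$ on $Z(I(A))$ has the intersection property) and then applies Theorem~\ref{supercentre}. You instead re-derive the needed special case of that result by hand: after the same reduction via Theorem~\ref{supercentre}, you verify Kawabe's freeness criterion directly on the spectrum $Y$ of $I_G(Z(I(A)))$, first showing $Y$ is minimal via $G$-essentiality, then building a $G$-map $Y\to\mP(\partial_FG)$ whose image must be the copy of $\partial_FG$ as Dirac masses, and finally pulling freeness back from $\partial_FG$ using Theorem~\ref{kk}. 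All steps check out: the minimality argument via the restriction $^*$-homomorphism to $C(F)$ is sound, and the one external ingredient you flag --- strong proximality of $\partial_FG$ and the resulting identification of the unique minimal closed $G$-invariant subset of $\mP(\partial_FG)$ with the Dirac masses --- is exactly \cite[Proposition~4.2]{furstenberg}, which the paper itself uses in the remark following Proposition~\ref{injGX}, so it is a legitimate import. What the two approaches buy: the paper's citation is shorter and rests on a stronger black box (simplicity of the crossed product, of which only the intersection property is needed here), while your argument is more self-contained relative to the tools already deployed in the paper (Kawabe's theorem and Theorem~\ref{kk}) and in effect reconstructs the boundary-theoretic core of the commutative case of the cited BKKO theorem; a minor inefficiency is that you establish freeness on $Y$, pass to the intersection property for $Z(I(A))$ via Kawabe, and then Theorem~\ref{supercentre} immediately converts this back into freeness on $Y$ --- you could instead splice your freeness statement directly into the proof of Theorem~\ref{supercentre}.
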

\begin{proof}By \cite[Theorem~7.1]{kalantarkennedyozbr}, the reduced crossed product $Z(I(A))\rtimes_r G$ is simple, so the action of $G$ on $Z(I(A))$ has the intersection property. Now apply Theorem~\ref{supercentre}.\end{proof}

\begin{cor}\label{primeloveit}Let $G$ be a $C^*$-simple discrete  group and let $A$ be a unital $G$-$C^*$-algebra. If $A$ is prime, then the action of $G$ on $A$ has the intersection property. In particular, $A\rtimes_r G$ is prime.\end{cor}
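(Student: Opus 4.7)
The plan is to apply Corollary \ref{minimalcentre} (or equivalently Theorem \ref{supercentre}) after reducing the computation of $Z(I(A))$ to the scalars. First I would recall that when $A$ is prime, the injective envelope $I(A)$ has trivial centre — this is exactly the fact already cited in the remark following Theorem \ref{bollocks} (via \cite[Corollary~8.1.28]{saitowright}). Consequently $Z(I(A))=\CC\cdot 1$, and the induced $G$-action on this one-dimensional $C^*$-algebra is necessarily trivial.

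Next I would verify the intersection property for $G$ acting on $Z(I(A))=\CC$. Either one notes that $\CC$ is trivially $G$-simple and invokes Corollary~\ref{minimalcentre} directly, or one observes that $\CC\rtimes_r G = C^*_r(G)$ is simple by $C^*$-simplicity of $G$, so the trivial action of $G$ on $\CC$ has the intersection property, and then applies Theorem~\ref{supercentre}. Either route delivers the first conclusion: the action of $G$ on $A$ has the intersection property.

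For the second assertion, let $I,J\s A\rtimes_r G$ be nonzero ideals. The intersection property gives $I\cap A\neq\{0\}$ and $J\cap A\neq\{0\}$, and each of $I\cap A$, $J\cap A$ is a (closed, two-sided) $G$-invariant ideal of $A$ because $A\rtimes_r G$ contains the unitaries $\lambda_g$ implementing the $G$-action. Since $A$ is prime, the product $(I\cap A)(J\cap A)$ is nonzero in $A$; as this product lies inside $IJ\cap A\subseteq I\cap J$, we conclude $I\cap J\neq\{0\}$, so $A\rtimes_r G$ is prime.

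There is no real obstacle here — the substantive content has already been absorbed into Theorem~\ref{bollocks} and Corollary~\ref{minimalcentre}. The only point that requires a moment's care is the passage from primeness of $A$ as a $C^*$-algebra to primeness of $A\rtimes_r G$, and this reduces to the elementary observation that contractions with $A$ send ideals of the crossed product to $G$-invariant ideals of $A$, combined with the usual characterisation of primeness in terms of products of ideals.
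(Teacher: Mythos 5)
Your proposal is correct and follows essentially the same route as the paper: trivial centre of $I(A)$ for prime $A$, then Corollary~\ref{minimalcentre} (the paper's proof is exactly this), and an elementary ideal argument for primeness of $A\rtimes_r G$. The only cosmetic difference is that you phrase the last step via products of ideals while the paper argues contrapositively with intersections; for closed two-sided ideals in a $C^*$-algebra these characterisations of primeness coincide, so the arguments are interchangeable.
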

\begin{proof}Since $A$ is prime, $I(A)$ has trivial centre. By Corollary~\ref{minimalcentre}, the action of $G$ on $A$ has the intersection property.

If $J_1\cap J_2=\{0\}$ for ideals $J_1,J_2\s A\rtimes_r G$, then $(J_1\cap A)\cap (J_2\cap A)=\{0\}$ so $J_i\cap A=\{0\}$ and $J_i=\{0\}$ for some $i$. Therefore $A\rtimes_r G$ is prime.\end{proof}

In light of the above result it is worth noting that $C^*$-simplicity in itself need not transform $G$-primeness of a $C^*$-algebra to primeness of the reduced crossed product (see p.\ \pageref{nonint}).

\begin{cor}Let $G$ be a $C^*$-simple discrete group and let $A$ be a unital $G$-$C^*$-algebra. Then there is an injective map of the set of prime $G$-invariant ideals to the set of prime ideals in $A\rtimes_r G$, given by $I\mapsto I\bar\rtimes_r G$.\end{cor}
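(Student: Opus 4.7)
The plan is to show two things: that the map $I \mapsto I\bar\rtimes_r G$ actually lands in the set of prime ideals of $A \rtimes_r G$, and that it is injective. Both steps should reduce to straightforward manipulations combined with the previous Corollary \ref{primeloveit}, which is where all the substantive content lives.

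For well-definedness, let $I \subseteq A$ be a prime $G$-invariant ideal and let $\pi\colon A \to A/I$ be the quotient map. By definition, $I\bar\rtimes_r G = \ker(\pi\rtimes_r\idd)$ where $\pi\rtimes_r\idd\colon A\rtimes_r G \to (A/I)\rtimes_r G$. I would first observe that the image of $\pi\rtimes_r\idd$ is a $C^*$-subalgebra containing $\pi(a)\lambda_g$ for all $a \in A$ and $g \in G$, hence is dense and therefore all of $(A/I)\rtimes_r G$. Consequently $(A\rtimes_r G)/(I\bar\rtimes_r G) \cong (A/I)\rtimes_r G$. Since $I$ is a prime ideal of $A$, the quotient $A/I$ is a prime unital $C^*$-algebra equipped with the induced $G$-action (using $G$-invariance of $I$), so Corollary \ref{primeloveit} yields that $(A/I)\rtimes_r G$ is prime. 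Therefore $I\bar\rtimes_r G$ is a prime ideal in $A\rtimes_r G$.

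For injectivity, suppose $I\bar\rtimes_r G = J\bar\rtimes_r G$ for $G$-invariant ideals $I,J\subseteq A$. Intersecting with $A\subseteq A\rtimes_r G$ and noting that the restriction of $\pi\rtimes_r\idd$ to $A$ factors as $A \xrightarrow{\pi} A/I \hookrightarrow (A/I)\rtimes_r G$ with faithful second arrow, one sees that $I\bar\rtimes_r G \cap A = \ker\pi = I$, and similarly for $J$. Hence $I = J$.

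The main content is Corollary \ref{primeloveit}, which has already been established; the only real obstacle in writing this proof is making sure the identification $(A\rtimes_r G)/(I\bar\rtimes_r G) \cong (A/I)\rtimes_r G$ is argued cleanly (surjectivity of $\pi\rtimes_r\idd$ and the definition of $I\bar\rtimes_r G$), after which both well-definedness and injectivity are essentially formal.
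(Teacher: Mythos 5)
Your proof is correct and follows essentially the same route as the paper: well-definedness via the identification $(A\rtimes_r G)/(I\bar\rtimes_r G)\cong (A/I)\rtimes_r G$ together with Corollary~\ref{primeloveit}, and injectivity from $(I\bar\rtimes_r G)\cap A=I$. You have merely spelled out the details (surjectivity of $\pi\rtimes_r\idd$ and the factorization of its restriction to $A$) that the paper leaves implicit.
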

\begin{proof}If $I\s A$ is a prime, $G$-invariant ideal, then $A/I$ is a prime $C^*$-algebra and $(A/I)\rtimes_r G$ is a prime $C^*$-algebra by Corollary~\ref{primeloveit}. Thus $I\bar\rtimes_r G$ is a prime ideal of $A\rtimes_r G$, so the map $I\mapsto I\bar\rtimes_r G$ is well-defined, and it is injective since $(I\bar\rtimes_r G)\cap A=I$ for each $G$-invariant ideal $I\s A$.
\end{proof}

We will finally comment on the relationship between the centre of the equivariant injective envelope and the equivariant injective envelope of the centre of a $G$-$C^*$-algebra.

\begin{rem}\label{countcount}In \cite{hamana1981}, Hamana proves that every unital $C^*$-algebra $A$ has a \emph{regular monotone completion} $\bar{A}$, which is the smallest monotone complete $C^*$-subalgebra of $I(A)$ containing $A$. Furthermore, he proves that the centres $Z(\bar{A})$ and $Z(I(A))$ coincide. Since $\ov{Z(A)}$ is a monotone complete $C^*$-subalgebra of the commutative $C^*$-algebra $I(Z(A))$, it is injective, and thus $\ov{Z(A)}=I(Z(A))$.

In a later article \cite[Corollary 1.6]{hamana1982}, Hamana proves that the conditions (i) and (ii) below are equivalent for any unital $C^*$-algebra $A$. Notice that (i) and (i') are the same condition, due to the above discussion.
\begin{itemize}
\item[(i)] $\ov{Z(A)}=Z(\bar{A})$.
\item[(i')] $I(Z(A))=Z(I(A))$.
\item[(ii)] Every non-zero ideal $I$ of $A$ contains a non-zero positive element of $Z(A)$ whenever $I^{\perp\perp}=I$.
\end{itemize}
Here we define $S^\perp=\{x\in A\mid xS=Sx=\{0\}\}$ for any subset $S\s A$.

Sait\^{o} gave an early concrete example of a unital $C^*$-algebra $A$ for which $I(Z(A))\neq Z(I(A))$ in \cite{saito}. We adapt this example to the $G$-equivariant case as follows. Let $G$ be a discrete group, let $H$ be a separable infinite-dimensional Hilbert space, and let $A$ be the unitization of $\ell^\infty(G,K(H))\oplus\ell^\infty(G,K(H))$, where $K(H)$ denotes the non-unital $C^*$-algebra of compact operators on $H$. We may view $A$ as a unital $G$-invariant $C^*$-subalgebra of the von Neumann algebra $$\mM=\ell^\infty(G,B(H))\oplus\ell^\infty(G,B(H))$$ As $\mM$ is $G$-equivariantly $^*$-isomorphic to $\ell^\infty(G,B(H)\oplus B(H))$, it is $G$-injective \cite[Lemma 2.2]{hamana1985}. Moreover, just as any positive $x\in B(H)$ is the supremum of an increasing net of positive compact operators, any $f\in\ell^\infty(G,B(H))$ is the supremum of an increasing net of positive operators in $\ell^\infty(G,K(H))$. An argument similar to Paulsen's -- the one given in the proof of Proposition \ref{injGX} -- then shows that the inclusion $A\s\mM$ is $G$-rigid. Therefore $I_G(A)=\mM$, but $$I_G(Z(A))=I_G(\CC),$$ whereas $$Z(I_G(A))=Z(\mM)\cong\ell^\infty(G)\oplus\ell^\infty(G).$$
Since $I_G(\CC)$ is $G$-simple (as any $G$-equivariant unital $^*$-homomorphism from $I_G(\CC)$ is injective by $G$-essentiality) and $\ell^\infty(G)\oplus\ell^\infty(G)$ is not, we have $I_G(Z(A))\neq Z(I_G(A))$. However,  there \emph{does} exist a $G$-equivariant, unital $^*$-homomorphism $$\delta\colon I_G(Z(A))\to Z(I_G(A))\cong\ell^\infty(G,\CC\oplus\CC),$$ e.g., for characters $\varphi_1$ and $\varphi_2$ on $I_G(\CC)$ we may define $$\delta(x)(g)=(\varphi_1(g^{-1}x),\varphi_2(g^{-1}x)),\quad x\in I_G(\CC),\ g\in G.$$\end{rem}

\subsection*{Acknowledgements}
The author is grateful to \mbox{Magdalena} Musat and Mikael R\o rdam for their invaluable suggestions and comments, as well as their support.

\begin{bibdiv}
\begin{biblist}

\bib{anantharamandelaroche}{article}{
      author={Anantharaman-Delaroche, Claire},
       title={On spectral characterizations of amenability},
        date={2003},
     journal={Israel Journal of Mathematics},
      volume={137},
       pages={1\ndash 33},
}

\bib{archboldsp}{article}{
      author={Archbold, R.~J.},
      author={Spielberg, J.~S.},
       title={Topologically free actions and ideals in discrete
  {$C^*$}-dynamical systems},
        date={1992},
     journal={Proceedings of the Edinburgh Mathematical Society},
      volume={37},
       pages={119\ndash 124},
}

\bib{berberian}{book}{
      author={Berberian, Sterling~K.},
       title={Baer {*-Rings}},
      series={Grundlehren der mathematischen {Wissenschaften}},
   publisher={Springer},
        date={1972},
      number={195},
}

\bib{blanchardkirchberg}{article}{
      author={Blanchard, Etienne},
      author={Kirchberg, Eberhard},
       title={{Non-simple purely infinite C$^*$-algebras: the Hausdorff case}},
        date={2004},
        ISSN={0022-1236},
     journal={{Journal of Functional Analysis}},
      volume={207},
      number={2},
       pages={461 \ndash  513},
  url={http://www.sciencedirect.com/science/article/pii/S0022123603003136},
}

\bib{kalantarkennedyozbr}{article}{
      author={Breuillard, Emmanuel},
      author={Kennedy, Matthew},
      author={Kalantar, Mehrdad},
      author={Ozawa, Narutaka},
       title={{$C^*$}-simplicity and the unique trace property for discrete
  groups},
        date={2017},
     journal={Publications mathématiques de l'IH\'{E}S},
      volume={126},
      number={1},
       pages={35\ndash 71},
}

\bib{brownozawa}{book}{
      author={Brown, Nathaniel~P.},
      author={Ozawa, Narutaka},
       title={{$C^*$}-algebras and finite-dimensional approximations},
      series={Gra\-du\-ate Studies in Mathematics},
   publisher={American Mathematical Society},
        date={2008},
      volume={88},
}

\bib{bryderkennedy}{article}{
      author={{Bryder}, R.~S.},
      author={{Kennedy}, M.},
       title={{Reduced twisted crossed products over C*-simple groups}},
        date={2018},
     journal={International Mathematics Research Notices},
      number={6},
       pages={1638\ndash 1655},
}

\bib{choieffros}{article}{
      author={Choi, Man-Duen},
      author={Effros, Edward~G.},
       title={Injectivity and operator spaces},
        date={1977},
     journal={Journal of Functional Analysis},
      volume={24},
       pages={156\ndash 209},
}

\bib{delaharpeskandalis}{article}{
      author={de~la Harpe, Pierre},
      author={Skandalis, Georges},
       title={Powers' property and simple {$C^*$}-algebras},
        date={1986},
     journal={Mathematische Annalen},
      volume={273},
      number={2},
       pages={241\ndash 250},
}

\bib{furstenberg}{incollection}{
      author={Furstenberg, Harry},
       title={Boundary theory and stochastic processes on homogeneous spaces},
        date={1973},
   booktitle={Harmonic analysis on homogeneous spaces},
      volume={26},
   publisher={Proceedings of Symposia in Pure Mathematics},
}

\bib{gromov}{article}{
      author={Gromov, M.},
       title={Random walk in random groups},
        date={2003},
        ISSN={1016-443X},
     journal={Geometric and Functional Analysis},
      volume={13},
      number={1},
       pages={73\ndash 146},
}

\bib{hadwinpaulsen}{article}{
      author={Hadwin, Don},
      author={Paulsen, Vern~I.},
       title={Injectivity and projectivity in analysis and topology},
        date={2011},
        ISSN={1869-1862},
     journal={Science China Mathematics},
      volume={54},
      number={11},
       pages={2347\ndash 2359},
         url={http://dx.doi.org/10.1007/s11425-011-4285-7},
}

\bib{hamana1979}{article}{
      author={Hamana, Masamichi},
       title={Injective envelopes of {$C^*$}-algebras},
        date={1979},
     journal={Journal of the Mathematical Society of Japan},
      volume={31},
      number={1},
       pages={181\ndash 197},
}

\bib{hamanaop}{article}{
      author={Hamana, Masamichi},
       title={Injective envelopes of operator systems},
        date={1979},
     journal={Publications of the Research Institute for Mathematical
  Sciences},
      volume={15},
      number={3},
       pages={773\ndash 785},
}

\bib{hamana1981}{article}{
      author={Hamana, Masamichi},
       title={Regular embeddings of {$C^{\ast} $}-algebras in monotone complete
  {$C^{\ast} $}-algebras},
        date={1981},
     journal={Journal of the Mathematical Society of Japan},
      volume={33},
      number={1},
       pages={159\ndash 183},
}

\bib{hamana1982}{article}{
      author={Hamana, Masamichi},
       title={The centre of the regular monotone completion of a
  {$C^*$}-algebra},
        date={1982},
     journal={Journal of the London Mathematical Society (2)},
      volume={26},
      number={3},
       pages={522\ndash 530},
}

\bib{hamana1985}{article}{
      author={Hamana, Masamichi},
       title={Injective envelopes of {$C^*$}-dynamical systems},
        date={1985},
     journal={Tohoku Mathematical Journal},
      volume={15},
      number={3},
       pages={463\ndash 487},
}

\bib{kalantarkennedy}{article}{
      author={Kalantar, Mehrdad},
      author={Kennedy, Matthew},
       title={Boundaries of reduced {$C^*$}-algebras of discrete groups},
        date={2017},
     journal={Journal f\"ur die Reine und Angewandte Mathematik},
      volume={727},
       pages={247\ndash 267},
}

\bib{kawabe}{article}{
      author={{Kawabe}, T.},
       title={{Uniformly recurrent subgroups and the ideal structure of reduced
  crossed products}},
        date={2017},
      eprint={arXiv:1701.03413},
}

\bib{kawamuratomiyama}{article}{
      author={Kawamura, Shinzô},
      author={Tomiyama, Jun},
       title={{Properties of Topological Dynamical Systems and Corresponding
  {$C^*$}-Algebras}},
        date={1990},
     journal={Tokyo Journal of Mathematics},
      volume={13},
      number={2},
       pages={251\ndash 257},
}

\bib{kennedy2015}{article}{
      author={{Kennedy}, M.},
       title={{An intrinsic characterization of C*-simplicity}},
        date={2020},
     journal={Annales scientifiques de l'\'{E}cole Normale Sup\'{e}rieure,
  Quatri\`{e}me S\'{e}rie},
      volume={53},
      number={5},
       pages={1105\ndash 1119},
}

\bib{kirchbergwassermann}{article}{
      author={Kirchberg, Eberhard},
      author={Wassermann, Simon},
       title={Exact groups and continuous bundles of {$C^*$}-algebras},
        date={1999},
        ISSN={0025-5831},
     journal={Mathematische Annalen},
      volume={315},
      number={2},
       pages={169\ndash 203},
}

\bib{paulsen2011}{article}{
      author={Paulsen, Vern~I.},
       title={Weak expectations and the injective envelope},
        date={2011},
     journal={Transactions of the American Mathematical Society},
      volume={363},
      number={9},
       pages={4735\ndash 4755},
}

\bib{saito}{article}{
      author={Sait\^o, Kazuyuki},
       title={A correction to: ``{A} structure theory in the regular {$\sigma
  $}-completion of {$C^{\ast} $}-algebras''},
        date={1982},
     journal={The Journal of the London Mathematical Society. Second Series},
      volume={25},
      number={3},
       pages={498},
}

\bib{saitowright}{book}{
      author={Sait\^{o}, Kazuyuki},
      author={Wright, J. D.~Maitland},
       title={{Monotone Complete C*-algebras and Generic Dynamics}},
   publisher={Springer-Verlag},
        date={2015},
}

\bib{sierakowski2010}{article}{
      author={Sierakowski, Adam},
       title={The ideal structure of reduced crossed products},
        date={2010},
        ISSN={1867-5778},
     journal={M\"unster Journal of Mathematics},
      volume={3},
       pages={237\ndash 261},
}

\bib{svenssontomiyama}{article}{
      author={Svensson, Christian},
      author={Tomiyama, Jun},
       title={On the commutant of {$C(X)$} in {$C^*$}-crossed products by
  {$\mathbb{Z}$} and their representations},
        date={2009},
     journal={Journal of Functional Analysis},
      volume={256},
      number={7},
       pages={2367\ndash 2386},
}

\end{biblist}
\end{bibdiv}

\end{document}